\newtheorem{theorem}{Theorem}[section]
\newtheorem{proposition}[theorem]{Proposition}
\newtheorem{corollary}[theorem]{Corollary}
\title{Correlation of local densities of states on mesoscopic energy scales in random band matrices}
\author{Justine Louis\footnote{The author acknowledges support of the Swiss NSF through the SwissMAP grant and of the ERC through the RandMat grant.}}
\newcommand{\rd}{{\rm d}}
\begin{document}
\maketitle
\begin{abstract}
We are interested in the phase transition of the correlation function of local densities of states at mesoscopic scales of random band matrices of width $W$ in dimension $2$. As a result, we show that the local densities of states are alternately positively and negatively correlated in the diffusive regime $O(\log(L/W))$ times, $L$ being the size of the system.
\end{abstract}

\section{Introduction}
Impurities in a disordered quantum system can be modelised by a random band Hermitian matrix $H$ corresponding to the Hamiltonian of the system. Pure materials have an underlying lattice structure which can be represented by a finite lattice in $\mathbb{Z}^d$, say the torus $\mathbb{T}=\left[-\frac{L}{2}\right.,\left.\frac{L}{2}\right)^d\cap\mathbb{Z}^d$. Given $x,y\in\mathbb{T}$, corresponding to the lattice sites, the matrix $H_{xy}$ represents the quantum system in a $d$-dimensional discrete box of length $L$. Following the model introduced by Erd{\H{o}}s and Knowles in \cite{erdHos2015altshulerI,erdHos2015altshulerII}, we assume that the matrix $H$ is an Hermitian matrix whose upper triangular entries are independent random variables with zero mean.
Let $S_{xy}:=\mathbb{E}\lvert H_{xy}\rvert^2$ be a deterministic matrix given by an arbitrary profile function $f$ on the scale $W$, that is,
\begin{equation*}
S_{xy}=\frac{1}{M-1}f\left(\frac{\left[x-y\right]_L}{W}\right),\quad M:=\sum_{x\in\mathbb{T}}f\left(\frac{x}{W}\right)
\end{equation*}
where $\left[x\right]_L$ denotes the canonical representative of $x\in\mathbb{Z}^d$ in the torus $\mathbb{T}$ and $f:\mathbb{R}^d\rightarrow\mathbb{R}$ is an even, bounded, non-negative, piecewise $C^1$ function such that $f$ and $\lvert\nabla f\rvert$ are integrable and $\int_{\mathbb{R}^d}{\rm d}xf(x)\lvert x\rvert^{4+c_1}<\infty$ for some constant $c_1>0$. Thus, for all $x\in\mathbb{T}$,
\begin{equation*}
\sum_{y\in\mathbb{T}}S_{xy}=\frac{M}{M-1}=:\mathcal{I}.
\end{equation*}
We assume that the law of $H_{xy}$ is symmetric, i.e. $H_{xy}$ and $-H_{xy}$ have the same law and that $A_{xy}:=(S_{xy})^{-1/2}H_{xy}$ have uniform subexponential decay, that is, $$\mathbb{P}(\lvert A_{xy}\rvert>\xi)\leqslant c_2e^{-\xi^{c_3}}$$ for some constants $c_2,c_3>0$ and for any $\xi>0$.
Let $\phi:\mathbb{R}\rightarrow\mathbb{R}$ be a test function defined as a smooth function verifying the following conditions \textbf{(C)}: $\int_{\mathbb{R}}\phi(E)\rd E=2\pi$ and such that for every $q>0$, there exists a constant $C_q$ satisfying
\begin{equation*}
\lvert\phi(E)\rvert\leqslant\frac{C_q}{1+\lvert E\rvert^q}.
\end{equation*}
We are interested in the correlation of the number of eigenvalues around two energies $E_1<E_2$ such that $\omega=E_2-E_1$ is much larger than the energy window $\eta$. More precisely, we are concerned with the following correlation
\begin{equation*}
\frac{\langle Y_{\phi_1}^\eta(E_1)\ \!;Y_{\phi_2}^\eta(E_2)\rangle}{\langle Y_{\phi_1}^\eta(E_1)\rangle\langle Y_{\phi_2}^\eta(E_2)\rangle}\quad\textrm{where }\langle X\rangle:=\mathbb{E}X,\quad\langle X\ \!;Y\rangle:=\mathbb{E}(XY)-\mathbb{E}X\mathbb{E}Y
\end{equation*}
and $Y_{\phi_i}^\eta(E)$ is the smoothed local density of states around energy $E$ on the scale $\eta$, defined by
\begin{equation*}
Y_{\phi_i}^\eta(E):=\frac{1}{L^d}\ \!\textrm{Tr}\ \!\phi_i^\eta(H/2-E),\ i=1,2,
\end{equation*}
$\phi_i^\eta$ being the rescaled test function $\phi_i^\eta(E):=\eta^{-1}\phi_i(\eta^{-1}E)$ and $\phi_i$ a test function satisfying conditions \textbf{(C)}. On the mesoscopic energy scale which corresponds to energy scales much larger than the eigenvalue spacing and much smaller than the total macroscopic energy scale of the system, we observe a phase transition at the critical energy, the Thouless energy, given by $\eta_c=W^2/L^2$.

Throughout the paper we make the following assumptions 
\begin{equation}
\label{hyp}
\omega\gg\eta,\quad\! W\ll L,\quad\! L\leqslant W^C,\quad\! \eta\ll1,\quad\! \eta\gg M^{-1/3},\quad\! E_1,E_2\in\left[-1+\kappa,1-\kappa\right],\quad\!\omega\leqslant c_\ast
\end{equation}
for some constant $C$, $\kappa$ a fixed positive constant, $c_\ast$ a small enough positive constant depending on $\kappa$. We introduce the covariance matrix of $S_{x0}$, its fourth moment and the covariance matrix of $f$
\begin{equation*}
D_{ij}:=\frac{1}{2}\sum_{x\in\mathbb{T}}\frac{x_ix_j}{W^2}S_{x0},\quad Q:=\frac{1}{32}\sum_{x\in\mathbb{T}}S_{x0}\bigg|D^{-1/2}\frac{x}{W}\bigg|^4,\quad (D_0)_{ij}:=\frac{1}{2}\int_{\mathbb{R}^d}\rd x\,x_ix_jf(x)
\end{equation*}
and assume that $c_4<D_0<c_5$ in the sense of quadratic forms for some positive constants $c_4,c_5$. Here we choose the matrix $D$ as a positive multiple of the $d\times d$ identity matrix $I_d$, i.e. $D=\mathcal{D}I_d$, $\mathcal{D}>0$. We also introduce the parameters
\begin{align*}
&\alpha:=e^{i(\arcsin(E_1+i\eta)-\arcsin(E_2-i\eta))},\quad u:=\lvert1-\alpha\rvert,\quad\zeta\in\mathbb{S}^1\textrm{ such that }1-\alpha=:u\zeta\\
&b:=\frac{1}{\mathcal{D}}\left(\frac{\sqrt{u}L}{2\pi W}\right)^2,\quad R:=\frac{\epsilon L}{2\pi W},\textrm{ where }\epsilon>0\textrm{ will be defined later}.
\end{align*}
Let $\nu\equiv\nu(E)=2\sqrt{1-E^2}/\pi$ and $E:=(E_1+E_2)/2$. Then $u$ and $\zeta$ expand as
\begin{equation}
\label{zeta}
u=\frac{2\omega}{\pi\nu}(1+O(\eta^2/\omega^2+\omega)),\quad\zeta=i+\frac{\omega}{\pi\nu}+2\frac{\eta}{\omega}+O\left(\omega^2+\eta+\frac{\eta^2}{\omega^2}\right).
\end{equation}
The diffusive regime is defined by $\eta\gg\eta_c$ which corresponds to large samples, i.e. $L$ is large with respect to the diffusion length $W/\sqrt{\eta}$. In terms of our parameter $b$ behaving as $\omega L^2/W^2$ using (\ref{zeta}), it means that $b\gg1$ from assumption~(\ref{hyp}) $\omega\gg\eta$. On the contrary, the mean-field regime is defined by $\eta\ll\eta_c$ corresponding to small samples.

The main contribution of the present paper is an exact asymptotic expression for the local density-density correlation function in dimension $1$ valid in both regimes while in dimension $2$ we provide a more precise expression of it with respect to the asymptotics derived in \cite{erdHos2015altshulerII} in the diffusive regime improving \cite[Proposition 3.5]{erdHos2015altshulerII}. In their paper the authors estimate a series appearing in the main term using a Riemann sum estimate while the key point here consists in no longer making this approximation. A curious fact is that we observe that for large $b$ but less than $(\log(L/W))^2$ the correlation function has an interesting oscillatory behaviour. The following theorem states our results.
\begin{theorem}
\label{Th}
There exists a constant $c_0>0$ such that the local density-density correlation satisfies:\\
(i) In dimension $1$ for all $b>0$,
\begin{align*}
&\frac{\langle Y_{\phi_1}^\eta(E_1)\ \!;Y_{\phi_2}^\eta(E_2)\rangle}{\langle Y_{\phi_1}^\eta(E_1)\rangle\langle Y_{\phi_2}^\eta(E_2)\rangle}=\\
&-\frac{1}{16(\pi\nu)^{5/2}\sqrt{\mathcal{D}}\omega^{3/2}LW}\left(\frac{\sinh(\pi\sqrt{2b})+\sin(\pi\sqrt{2b})}{\sinh^2(\pi\sqrt{b/2})+\sin^2(\pi\sqrt{b/2})}+\pi\sqrt{2b}\frac{\sinh(\pi\sqrt{2b})\sin(\pi\sqrt{2b})}{(\sinh^2(\pi\sqrt{b/2})+\sin^2(\pi\sqrt{b/2}))^2}\right)\\
&+O\left(\frac{\omega^{-1/2}}{LW}\left(1+\frac{\eta}{\omega^2}\right)+\frac{1}{\omega L^2}\left(1+\frac{\eta}{\omega^2}\right)+\frac{1}{\omega W^3}e^{-\pi\sqrt{2\omega}L/W}+\frac{W^{-c_0-1}}{L(\omega+\eta)^{1/2}}+\frac{W^{-c_0}}{L^2(\omega+\eta)}\right).
\end{align*}
(ii) In dimension $2$ for $b\gg1$,
\begin{align*}
&\frac{\langle Y_{\phi_1}^\eta(E_1)\ \!;Y_{\phi_2}^\eta(E_2)\rangle}{\langle Y_{\phi_1}^\eta(E_1)\rangle\langle Y_{\phi_2}^\eta(E_2)\rangle}=-\frac{1}{2\pi^5\mathcal{D}\nu^4L^2W^2}\left[\frac{L^2}{\pi \mathcal{D}W^2}e^{-\pi\sqrt{2b}}b^{-3/4}\sin\left(\pi\sqrt{2b}-\frac{\pi}{8}\right)-(Q-1)\lvert\log\omega\rvert\right.\\
&\left.+O\left(1+\frac{\eta^2}{\omega^3}+\omega\lvert\log\omega\rvert+e^{-\pi\sqrt{2b}}b^{1/4}\left(1+\frac{1}{\omega W^2}+\frac{L^2}{W^2}b^{-3}\right)+\frac{1}{W^{c_0}}\lvert\log(\omega+\eta)\rvert+\frac{W^{2-c_0}}{L^2(\omega+\eta)}\right)\right].
\end{align*}
\end{theorem}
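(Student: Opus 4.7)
The plan is to build on the momentum-space representation of the correlation function established in \cite[Proposition 3.5]{erdHos2015altshulerII}, where after a cumulant/resolvent expansion the dominant contribution is written as a lattice sum on the dual torus of a rational function of $1-\alpha\widehat{S}(p)$, with errors polynomial in $W^{-1}$. The point of the improvement announced in the introduction is to evaluate this sum exactly instead of replacing it by its Riemann integral. Using the moment assumptions on $f$ we have, for $|p|\ll 1/W$,
\begin{equation*}
1-\alpha\widehat{S}(p)\;=\;u\zeta+\mathcal{D}W^{2}|p|^{2}+QW^{4}|p|^{4}+\ldots,
\end{equation*}
so that after the substitution $p=2\pi k/L$, $k\in\mathbb{Z}^{d}$, and rescaling, the core object is $\sum_{k}(|k|^{2}+b\zeta)^{-2}$. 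Since $u\sim\omega$ is small the summand concentrates on $|k|\sim\sqrt{b}\ll L/W$, so the Taylor expansion is valid on the relevant range and the high-momentum tail, together with a negligible aliasing correction from replacing the sum over $\mathbb{Z}^{d}/L\mathbb{Z}^{d}$ by the sum over $\mathbb{Z}^{d}$, can be absorbed into the error.

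In dimension one the reduced sum is evaluated in closed form via the Mittag-Leffler identity and its derivative,
\begin{equation*}
\sum_{k\in\mathbb{Z}}\frac{1}{(k^{2}+a^{2})^{2}}\;=\;\frac{\pi\coth(\pi a)}{2a^{3}}+\frac{\pi^{2}}{2a^{2}\sinh^{2}(\pi a)}.
\end{equation*}
Substituting $a=\sqrt{b\zeta}$ with $\zeta=i+O(\omega+\eta/\omega)$ so that $\pi a=\pi\sqrt{b/2}(1+i)+\ldots$, and using the elementary identities $|\sinh(x(1+i))|^{2}=\sinh^{2}x+\sin^{2}x$ together with $\mathrm{Re}[\sinh(x(1+i))\cosh(x(1+i))]=\tfrac{1}{2}(\sinh(2x)+\sin(2x))$, one recovers precisely the combination of $\sinh(\pi\sqrt{2b})+\sin(\pi\sqrt{2b})$ and $\sinh(\pi\sqrt{2b})\sin(\pi\sqrt{2b})$ in the statement, after taking the real part that enters the correlation; the perturbation from $\zeta-i$ and the aliasing term are pushed into the remainder.

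In dimension two no such closed form is available, so instead I would apply Poisson summation to $\sum_{k\in\mathbb{Z}^{2}}(|k|^{2}+b\zeta)^{-2}$. The planar Fourier transform of $(|x|^{2}+z^{2})^{-2}$ is $2\pi^{2}|\xi|K_{1}(2\pi z|\xi|)/z$, so the $n=0$ dual term equals the convergent continuum integral $\pi/(b\zeta)$ (a smooth, non-logarithmic contribution), while the four $|n|=1$ dual terms, using $2\pi\sqrt{b\zeta}\approx\pi\sqrt{2b}(1+i)$ and the large-argument asymptotic $K_{1}(w)\sim\sqrt{\pi/(2w)}\,e^{-w}$, together with the phase extracted from $w^{-1/2}(b\zeta)^{-1/2}$, produce exactly the oscillating factor $e^{-\pi\sqrt{2b}}b^{-3/4}\sin(\pi\sqrt{2b}-\pi/8)$; dual points with $|n|\geq 2$ contribute $O(e^{-2\pi\sqrt{2b}})$ which fits in the error. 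The separate term $-(Q-1)|\log\omega|$ arises from the interplay of the quartic coefficient $Q$ in the expansion of $\widehat{S}$ with the normalization $\mathcal{I}=M/(M-1)$: expanding $(u\zeta+\mathcal{D}W^{2}|p|^{2}+QW^{4}|p|^{4})^{-2}$ to first order in $Q$ produces a two-dimensional sum whose continuum counterpart is logarithmically divergent at the UV scale $1/W$ and cut off on the IR side at scale $u\sim\omega$, yielding $|\log\omega|$ with coefficient $Q-1$ after combining with the $\mathcal{I}$-renormalization correction.

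The main obstacle I expect is the bookkeeping in dimension two: isolating the exact coefficient $(Q-1)$ of $|\log\omega|$ requires a careful match of the lattice sum with its Poisson surrogate at intermediate momenta $1/L\ll|p|\ll 1/W$, where the quartic correction in $\widehat{S}$ and the renormalization $\mathcal{I}$ both live on the same logarithmic scale, while one must simultaneously check that the subleading Bessel tails from $|n|\geq 2$ and the high-momentum Taylor remainder of $\widehat{S}$ at $|p|\gtrsim 1/W$ fit under the stated error, so that the small oscillating piece $e^{-\pi\sqrt{2b}}b^{-3/4}\sin(\pi\sqrt{2b}-\pi/8)$ remains resolvable above them in the whole diffusive regime $1\ll b\ll(\log(L/W))^{2}$.
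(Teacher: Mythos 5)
Your dimension-1 argument coincides with the paper's: both reduce the correlation to $\mathcal{I}\sum_q(1-\alpha+\mathcal{D}\lvert q\rvert^2)^{-2}$ and evaluate $\sum_{k\in\mathbb{Z}}(k^2+\zeta b)^{-2}$ by differentiating $\sum_k(z+k^2)^{-1}=\pi z^{-1/2}\coth(\pi\sqrt z)$, then take real parts at $\zeta\approx i$. (One slip there: $\mathrm{Re}[\sinh(x(1+i))\cosh(x(1+i))]$ equals $\tfrac12\sinh(2x)\cos(2x)$, not $\tfrac12(\sinh 2x+\sin 2x)$; the identities actually needed are $\coth(x(1+i))=\frac{\sinh 2x-i\sin 2x}{2(\sinh^2x+\sin^2x)}$ and its analogue for $\sinh^{-2}$, which do yield the stated combination.) In dimension 2 you take a genuinely different route to the oscillating term: the paper writes $\mathcal{S}_2(b)$ as a Laplace transform of $\theta(t)^2$, invokes the Jacobi sum-of-two-squares identity to obtain the alternating series of Proposition~\ref{ReS2}, and then applies Abel--Plana plus a saddle-point analysis; you instead apply two-dimensional Poisson summation directly, and the four $\lvert n\rvert=1$ dual terms $\frac{2\pi^2}{\sqrt{ib}}K_1(2\pi\sqrt{ib})$ with $K_1(w)\sim\sqrt{\pi/(2w)}\,e^{-w}$ do reproduce $-4\pi^2e^{-\pi\sqrt{2b}}b^{-3/4}\sin(\pi\sqrt{2b}-\pi/8)$ exactly, which is shorter and makes the phase $-\pi/8$ transparent. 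Three caveats. First, the next correction in the uniform expansion of $K_1$ is $O(w^{-1})=O(b^{-1/2})$ relative and does not vanish, so your route as stated gives remainder $O(e^{-\pi\sqrt{2b}}b^{-5/4})$ rather than the $b^{-11/4}$ of Proposition~\ref{propS2}; this still isolates the leading term and suffices for Corollary~\ref{alt}, but matching the theorem's stated error would require carrying further terms of the expansion. Also the next dual shell is $\lvert n\rvert^2=2$, contributing $O(e^{-2\pi\sqrt b})$, not $\lvert n\rvert\geqslant2$. Second, the term $-(Q-1)\lvert\log\omega\rvert$ is not rederived in the paper at all --- it is imported from \cite[Proposition 3.3(ii)]{erdHos2015altshulerII} --- and your sketch misattributes the $-1$: it cannot come from $\mathcal{I}=M/(M-1)=1+O(M^{-1})$, but rather from the $-\mathcal{D}\lvert q\rvert^2$ in the numerator $\mathcal{I}-\mathcal{D}\lvert q\rvert^2$ of the trace, whose sum $\sum_q\mathcal{D}\lvert q\rvert^2(\mathcal{D}\lvert q\rvert^2+u\zeta)^{-2}$ is the second logarithmically divergent piece between the scales $\sqrt u$ and $\epsilon$, alongside the $\mathcal{Q}(q)$ contribution that produces the $Q$. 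With that bookkeeping repaired, your plan goes through.
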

We emphasise that for the two-dimensional case, the second term has been calculated in \cite{erdHos2015altshulerII} while the first term is new, and is dominant for $b\ll(\log(L/W))^2$, showing that the correlation function oscillates around zero $O(\log(L/W))$ times. Then for $b\gtrsim(\log(L/W))^2$ it is dominated by the logarithmic term. This will be shown in Proposition~\ref{sameorder} and Corollary~\ref{alt} below.

An interesting question would be to see whether this oscillatory behaviour is as well observed in other systems and its physical interpretation.
\section{Calculation of the correlation function}
In \cite[Theorem $6.1$]{erdHos2015altshulerII}, it has been shown that, under the assumptions~(\ref{hyp}), there exists a constant $c_0>0$ such that for any $E_1,E_2\in\left[-1+\kappa,1-\kappa\right]$ for small enough $c_\ast>0$, the local density-density correlation satisfies
\begin{equation}
\label{corr}
\frac{\langle Y_{\phi_1}^\eta(E_1)\ \!;Y_{\phi_2}^\eta(E_2)\rangle}{\langle Y_{\phi_1}^\eta(E_1)\rangle\langle Y_{\phi_2}^\eta(E_2)\rangle}=\frac{1}{(LW)^d}\left(\Theta_{\phi_1,\phi_2}^\eta(E_1,E_2)+M^{-c_0}O\left(R(\omega+\eta)+\frac{M}{N(\omega+\eta)}\right)\right)
\end{equation}
where $R(s):=1+\mathbf{1}(d=1)s^{-1/2}+\mathbf{1}(d=2)\lvert\log s\rvert$ and the leading term $\Theta_{\phi_1,\phi_2}^\eta$ is given by 
\begin{equation}
\label{theta}
\Theta^\eta_{\phi_1,\phi_2}(E_1,E_2)=\frac{2W^d}{\pi^4\nu^4L^d}\mathrm{Re}\,\mathrm{Tr}\frac{S}{(1-\alpha S)^2}\left(1+O(\omega)\right)+O(1).
\end{equation}
The proof of the above relation is given in the appendix. For $q\in\left[-\pi W,\pi W\right)^d$ let $\widehat{S}_W(q)$ denote $\widehat{S}(q/W)$ where $\widehat{S}(p):=\sum_{x\in\mathbb{T}}e^{-ip\cdot x}S_{x0}$ is defined for all $p\in\left[-\pi,\pi\right)^d$. Also define the following quantity
\begin{equation*}
\mathcal{Q}(q):=\frac{1}{4!}\sum_{x\in\mathbb{T}}\left(x\cdot q/W\right)^4S_{x0}.
\end{equation*}
Recall from \cite[Lemma B.$1$]{erdHos2015altshulerII} that for any $\epsilon>0$ there is a $\delta_\epsilon>0$ such that $\widehat{S}_W(q)$ is bounded, $|\widehat{S}_W(q)|\leqslant1-\delta_\epsilon$ if $|q|\geqslant\epsilon$ for large enough $W$, and has the following expansion
\begin{equation*}
\widehat{S}_W(q)=\mathcal{I}-\mathcal{D}\lvert q\rvert^2+\mathcal{Q}(q)+O(|q|^{4+c_1})
\end{equation*}
which comes from a fourth order Taylor's expansion of
\begin{equation*}
\mathcal{I}-\widehat{S}_W(q)=\frac{1}{M-1}\sum_{v\in W^{-1}\mathbb{T}}(1-\cos(q\cdot v))f(v).
\end{equation*}
Let $\epsilon>0$ be such that $\widehat{S}_W(q)=\mathcal{I}-a(q)$ where $a$ is a function satisfying $c_4\lvert q\rvert^2\leqslant a(q)\leqslant c_5\lvert q\rvert^2\leqslant1$ for $\lvert q\rvert\leqslant\epsilon$.
Equation \cite[B.15]{erdHos2015altshulerII} states that
\begin{align}
\label{tr}
\mathrm{Tr}\frac{S}{(1-\alpha S)^2}&=\sum_{q\in W\mathbb{T}^*}\frac{\widehat{S}_W(q)}{(1-\alpha\widehat{S}_W(q))^2}\mathbf{1}(|q|\leqslant\epsilon)+O\left(\frac{L^d}{\delta_\epsilon W^d}\right)
\end{align}
where $\mathbb{T}^*=\frac{2\pi}{L}\mathbb{T}$.

In Subsection~\ref{dim1} we study the above trace in the one-dimensional case while the two-dimensional case is treated in Subsection~\ref{dim2}.
\subsection{Dimension 1}
\label{dim1}
\begin{proposition}
\label{propTr1}
In dimension $1$, the following relation holds
\begin{align*}
\mathrm{Tr}\frac{S}{(1-\alpha S)^2}=&\frac{1}{32\pi^3\mathcal{D}^2}\frac{L^4}{W^4}\frac{1}{(\zeta b)^{3/2}}\left(\coth\left(\pi\sqrt{\zeta b}\right)+\pi\sqrt{\zeta b}\sinh^{-2}\left(\pi\sqrt{\zeta b}\right)\right)\\
&+O\left(\frac{L}{W}\omega^{-1/2}+\frac{L}{W^2}\omega^{-5/2}+\frac{W^2}{\omega^5L^3}+\frac{L^2}{W^3\omega^2}e^{-\pi\sqrt{2\omega}L/W}\right).
\end{align*}
\end{proposition}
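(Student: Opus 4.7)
The plan is to reduce the trace to a one-dimensional lattice series and evaluate it in closed form via a classical cotangent identity. Starting from (\ref{tr}), in dimension $1$ the momenta $q\in W\mathbb{T}^{\ast}$ with $|q|\leqslant\epsilon$ are parameterised by $q=2\pi Wn/L$ with $|n|\leqslant R$. Combining the Taylor expansion $\widehat{S}_W(q)=\mathcal{I}-\mathcal{D}q^{2}+\mathcal{Q}(q)+O(|q|^{4+c_1})$ with $1-\alpha=u\zeta$ and the definition $b=\mathcal{D}^{-1}(\sqrt{u}L/(2\pi W))^{2}$, the denominator decomposes as
\[
1-\alpha\widehat{S}_W(q)=u\zeta+\mathcal{D}q^{2}+r(q)=\mathcal{D}\Bigl(\frac{2\pi W}{L}\Bigr)^{2}(n^{2}+\zeta b)+r(q),
\]
where $r(q)$ gathers the subleading contributions of orders $u\cdot q^{2}$, $\mathcal{Q}(q)$, $|q|^{4+c_1}$, and the $(M-1)^{-1}$ correction from $\mathcal{I}-1$. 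After also replacing $\widehat{S}_W(q)$ in the numerator by $1$, the principal part of the sum becomes
\[
\frac{1}{\mathcal{D}^{2}}\Bigl(\frac{L}{2\pi W}\Bigr)^{4}\sum_{|n|\leqslant R}\frac{1}{(n^{2}+\zeta b)^{2}}.
\]

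Next, I would extend the truncated sum to $n\in\mathbb{Z}$ and apply the classical identity
\[
\sum_{n\in\mathbb{Z}}\frac{1}{(n^{2}+a^{2})^{2}}=\frac{\pi}{2a^{3}}\Bigl(\coth(\pi a)+\frac{\pi a}{\sinh^{2}(\pi a)}\Bigr)
\]
with $a=\sqrt{\zeta b}$ on the principal branch, which is well defined because $\zeta\approx i$ has positive real part. A direct check that the prefactor $(L/(2\pi W))^{4}\cdot(\pi/2)/\mathcal{D}^{2}$ equals $L^{4}/(32\pi^{3}\mathcal{D}^{2}W^{4})$ then recovers the closed-form expression appearing in the proposition.

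The remainder of the proof is a careful bookkeeping of the four sources of deviation from the leading approximation: (a) the numerator replacement $\widehat{S}_W(q)\mapsto 1$, which after the change of variables $q=\sqrt{u/\mathcal{D}}\,t$ becomes a Riemann sum controlled by the spacing $\sim W/(L\sqrt{u})$; (b) the corrections $\alpha\mathcal{Q}(q)$ and $O(|q|^{4+c_1})$ in $\alpha a(q)$, which enter $(u\zeta+\mathcal{D}q^{2})^{-3}$ with an extra factor of $q^{4}$; (c) the approximation $\alpha\approx 1$ in $\alpha a(q)$, which brings an extra factor of $u$; and (d) the truncation of the sum at $|n|=R$, whose polynomial part is absorbed by the previous bounds while the exponentially small part arising from the $\sinh^{-2}$ term on the full $\mathbb{Z}$-sum accounts for the $e^{-\pi\sqrt{2\omega}L/W}$ factor (up to the implicit constants relating $\pi\sqrt{2b}$ to $\pi\sqrt{2\omega}L/W$, using $\mathrm{Re}\sqrt{\zeta b}\sim\sqrt{b/2}$). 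The main obstacle is precisely this last point: because $\sqrt{\zeta b}$ sits on the ray $\sqrt{b}\,e^{i\pi/4}$, the full-$\mathbb{Z}$ identity produces genuinely oscillatory exponential terms that must be separated from the polynomial truncation tail without inadvertently modifying the coefficient of the closed-form main term, which forces one to keep $\alpha$ and the full $\widehat{S}_W$ exact throughout most of the calculation and to expand only at the very end.
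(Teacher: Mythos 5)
Your proposal is correct and follows essentially the same route as the paper: reduce the trace to the lattice sum $\mathcal{S}_1(b,R)=\sum_{|n|\leqslant R}(n^2+\zeta b)^{-2}$ via the Taylor expansion of $\widehat{S}_W$ (the paper outsources the error bookkeeping for the numerator replacement and the denominator expansion to \cite[B.16, B.17]{erdHos2015altshulerII}, which you re-derive by hand), extend the sum to $\mathbb{Z}$, and evaluate it with the differentiated cotangent identity obtained from Poisson summation, with $\sqrt{\zeta b}$ on the principal branch. The only differences are presentational, so there is nothing substantive to add.
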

\begin{proof}
With the notations introduced above, equation \cite[B.16]{erdHos2015altshulerII} reads
\begin{align*}
\sum_{q\in W\mathbb{T}^*}\frac{\widehat{S}_W(q)}{(1-\alpha\widehat{S}_W(q))^2}\mathbf{1}(|q|\leqslant\epsilon)&=\mathcal{I}\sum_{q\in W\mathbb{T}^*}\frac{\mathbf{1}(|q|\leqslant\epsilon)}{(1-\alpha\widehat{S}_W(q))^2}+O\left(\frac{L}{W}R(u)\right)
\end{align*}
and equation \cite[B.17]{erdHos2015altshulerII}
\begin{align*}
\sum_{q\in W\mathbb{T}^*}\frac{\mathbf{1}(|q|\leqslant\epsilon)}{(1-\alpha\widehat{S}_W(q))^2}&=\sum_{q\in W\mathbb{T}^*}\frac{\mathbf{1}(|q|\leqslant\epsilon)}{(1-\alpha+\mathcal{D}\lvert q\rvert^2)^2}+\frac{L}{W}O\left(R(u)+\frac{u^{-5/2}}{W}\right)
\end{align*}
which together with equation~(\ref{tr}) lead to the following expression for the trace appearing in the $\Theta_{\phi_1,\phi_2}^\eta(E_1,E_2)$ term in (\ref{theta})
\begin{equation}
\label{trace1}
\mathrm{Tr}\frac{S}{(1-\alpha S)^2}=\mathcal{I}\sum_{q\in W\mathbb{T}^*}\frac{\mathbf{1}(|q|\leqslant\epsilon)}{(1-\alpha+\mathcal{D}\lvert q\rvert^2)^2}+O\left(\frac{L}{W}u^{-1/2}+\frac{L}{W^2}u^{-5/2}\right).
\end{equation}
Let
\begin{equation*}
\mathcal{S}_1(b,R):=\sum_{n\in\mathbb{Z}}\frac{\mathbf{1}(\lvert n\rvert\leqslant R)}{(n^2+\zeta b)^2},
\end{equation*}
then
\begin{equation}
\label{s1}
\sum_{q\in W\mathbb{T}^*}\frac{\mathbf{1}(|q|\leqslant\epsilon)}{(1-\alpha+\mathcal{D}\lvert q\rvert^2)^2}=\frac{1}{\mathcal{D}^2}\left(\frac{L}{2\pi W}\right)^4\mathcal{S}_1(b,R).
\end{equation}
In dimension $1$, 
\begin{equation*}
\mathcal{S}_1(b,R)=\mathcal{S}_1(b)+O\left(R^{-3}\right),\textrm{ with }\mathcal{S}_1(b):=\sum_{n\in\mathbb{Z}}\frac{1}{(n^2+\zeta b)^2}.
\end{equation*}
Using Poisson summation formula,
\begin{equation}
\label{sum1}
\sum_{q\in\mathbb{Z}}\frac{1}{z^2+q^2}=\sum_{n\in\mathbb{Z}}\int_\mathbb{R}{\rm d}q\ \frac{e^{-2\pi inq}}{z^2+q^2}=\sum_{n\in\mathbb{Z}}\frac{\pi}{z}e^{-2\pi|n|z}=\frac{\pi}{z}\coth(\pi z).
\end{equation}
Thus
\begin{equation*}
\sum_{q\in\mathbb{Z}}\frac{1}{z+q^2}=\frac{\pi}{\sqrt{z}}\coth(\pi\sqrt{z}).
\end{equation*}
By differentiating the above relation with respect to $z$, we obtain
\begin{equation*}
\sum_{q\in\mathbb{Z}}\frac{1}{(z+q^2)^2}=\frac{\pi}{2z^{3/2}}\coth(\pi\sqrt{z})+\frac{\pi^2}{2z}\frac{1}{\sinh^2(\pi\sqrt{z})}.
\end{equation*}
Hence
\begin{equation*}
\mathcal{S}_1(b)=\frac{\pi}{2(\zeta b)^{3/2}}\coth(\pi\sqrt{\zeta b})+\frac{\pi^2}{2\zeta b}\frac{1}{\sinh^2(\pi\sqrt{\zeta b})}.
\end{equation*}
The result then follows from (\ref{s1}) and (\ref{trace1}) and the fact that $\mathcal{I}=1+O(W^{-1})$.
\end{proof}
\begin{proof}[Proof of Theorem~\ref{Th}(i)]
We have
\begin{align*}
&\coth(\pi\sqrt{ib})=\frac{1}{2}\frac{\sinh(\pi\sqrt{2b})-i\sin(\pi\sqrt{2b})}{\sinh^2(\pi\sqrt{b/2})+\sin^2(\pi\sqrt{b/2})}\\
&\frac{1}{\sinh^2(\pi\sqrt{ib})}=\frac{\sinh^2(\pi\sqrt{b/2})\cos(\pi\sqrt{2b})-\sin^2(\pi\sqrt{b/2})-i\sin(\pi\sqrt{2b})\sinh(\pi\sqrt{2b})/2}{(\sinh^2(\pi\sqrt{b/2})+\sin^2(\pi\sqrt{b/2}))^2}.
\end{align*}
Hence
\begin{equation*}
\mathrm{Re}\,\mathcal{S}_1(b)=-\frac{\pi}{4\sqrt{2}}\frac{1}{b^{3/2}}\frac{\sinh(\pi\sqrt{2b})+\sin(\pi\sqrt{2b})}{\sinh^2(\pi\sqrt{b/2})+\sin^2(\pi\sqrt{b/2})}-\frac{\pi^2}{4b}\frac{\sinh(\pi\sqrt{2b})\sin(\pi\sqrt{2b})}{(\sinh^2(\pi\sqrt{b/2})+\sin^2(\pi\sqrt{b/2}))^2}.
\end{equation*}
The result follows using Proposition~\ref{propTr1}.
\end{proof}
Below is a plot of the following function at the transition $b\simeq1$
\begin{equation*}
f(b)=-\frac{\sinh(\pi\sqrt{2b})+\sin(\pi\sqrt{2b})}{\sinh^2(\pi\sqrt{b/2})+\sin^2(\pi\sqrt{b/2})}-\pi\sqrt{2b}\frac{\sinh(\pi\sqrt{2b})\sin(\pi\sqrt{2b})}{(\sinh^2(\pi\sqrt{b/2})+\sin^2(\pi\sqrt{b/2}))^2}.
\end{equation*}
\begin{figure}[H]
\centering
\includegraphics[width=8cm]{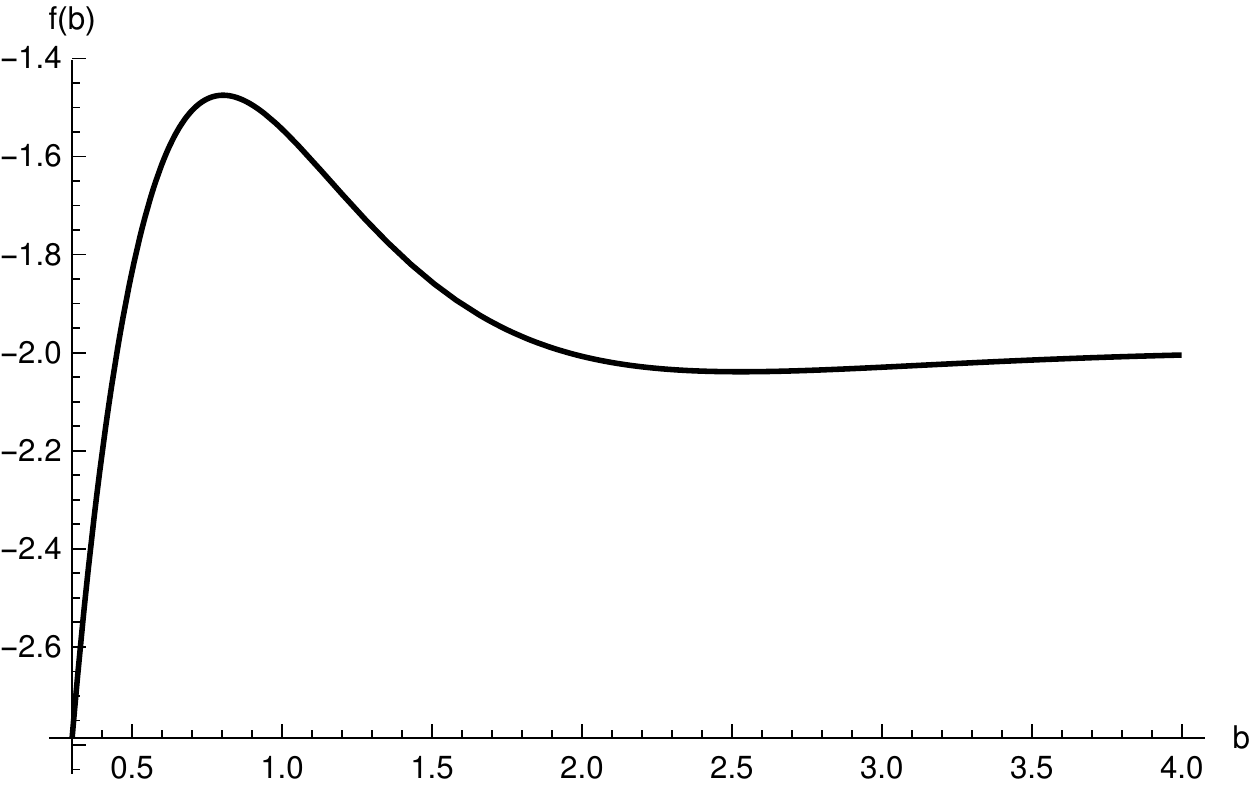}
\caption{The correlation function multiplied by $\omega^{3/2}$ at the phase transition as a function of $\omega(L/W)^2$.}
\end{figure}
\begin{corollary}
In the mean-field regime $b\ll1$,
\begin{align*}
\frac{\langle Y_{\phi_1}^\eta(E_1)\ \!;Y_{\phi_2}^\eta(E_2)\rangle}{\langle Y_{\phi_1}^\eta(E_1)\rangle\langle Y_{\phi_2}^\eta(E_2)\rangle}=&-\frac{1}{LW}\left[\frac{1}{2\pi^2\nu^2\omega^2}\frac{W}{L}-\frac{1}{360\mathcal{D}^2\pi^4\nu^4}\left(\frac{L}{W}\right)^3\right.\\
&\left.+O\left(\frac{1}{\sqrt{\omega}}+\frac{1}{\omega^{5/2}W}+\frac{1}{\omega^2L}+\frac{\eta^2}{\omega^4}\frac{W}{L}+\frac{W^{-c_0}}{(\omega+\eta)^{1/2}}+\frac{W}{L(\omega+\eta)}\right)\right]
\end{align*}
which recovers \cite[Theorem $2.9$ $(ii)$]{erdHos2015altshulerI} up to a multiplicative constant in the leading term and gives the next term in the asymptotic expansion.
In the diffusive regime $1\ll b\ll\min\left((\log(L/W))^2,(\eta/\omega+\omega)^{-2}\right)$,
\begin{align*}
\frac{\langle Y_{\phi_1}^\eta(E_1)\ \!;Y_{\phi_2}^\eta(E_2)\rangle}{\langle Y_{\phi_1}^\eta(E_1)\rangle\langle Y_{\phi_2}^\eta(E_2)\rangle}=&-\frac{1}{16\sqrt{2}\pi^7\nu^4\mathcal{D}^2}\frac{L^2}{W^4} b^{-3/2}\Bigg[1+4\sqrt{2}e^{-\pi\sqrt{2b}}\left[\sqrt{b}\sin(\pi\sqrt{2b})\right.\\
&\left.+O\left(1+\sqrt{\omega}\frac{L}{W^2}+b\left(\frac{\eta}{\omega}+\omega\right)+b^{3/2}e^{\pi\sqrt{2b}}\frac{W^2}{L^4}\left(\frac{W^{-c_0}}{(\omega+\eta)^{1/2}}+\frac{W}{L(\omega+\eta)}\right)\right)\right]\Bigg]
\end{align*}
which recovers \cite[Theorem $2.12$ $(i)$]{erdHos2015altshulerII} and gives the next term in the asymptotic expansion.
\end{corollary}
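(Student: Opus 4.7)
The plan is to specialise the exact formula of Theorem~\ref{Th}(i) to the two asymptotic regimes $b\ll 1$ and $b\gg 1$, and in both cases convert back from $b$ to $\omega$ via $b=uL^2/(4\pi^2\mathcal{D}W^2)$ and the expansion of $u$ given in~(\ref{zeta}). Writing $y:=\pi\sqrt{b/2}$ so that $\pi\sqrt{2b}=2y$, the identity $\sinh^2y+\sin^2y=\tfrac12(\cosh 2y-\cos 2y)$ together with the power series $\sinh 2y+\sin 2y=2\sum_{k\geqslant 0}(2y)^{4k+1}/(4k+1)!$ give clean access to the bracketed quantity of Theorem~\ref{Th}(i) in both regimes.

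\textbf{Mean-field regime ($b\ll 1$).} A cleaner route than direct expansion of the bracket is to work with the original series: as $b\to 0$,
\begin{equation*}
\mathcal{S}_1(b)=\frac{1}{(\zeta b)^2}+\sum_{n\neq 0}\frac{1}{n^4}+O(b)=\frac{1}{(\zeta b)^2}+\frac{\pi^4}{45}+O(b).
\end{equation*}
Equivalently, Taylor expansion of the Poisson formulae for $\coth$ and $\sinh^{-2}$ produces the same constant after the divergent $(\zeta b)^{-1}$-type pieces cancel between the two fractions. Taking real parts using $\zeta\approx i$ gives $\mathrm{Re}\,\mathcal{S}_1(b)=-b^{-2}+\pi^4/45+O(b)$; substituting through (\ref{trace1}), (\ref{theta}) and (\ref{corr}) reproduces both the leading $(2\pi^2\nu^2\omega^2)^{-1}W/L$ contribution (from the $-b^{-2}$ term) and the subleading $(L/W)^3$ contribution (from the constant $\pi^4/45$). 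The stated error bundle combines the $O(\omega)$ factor of (\ref{theta}), the error of Proposition~\ref{propTr1}, the $O(\eta^2/\omega^2+\omega)$ relative error in the expansion $u\mapsto\omega$, and the truncation of the Taylor series.

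\textbf{Diffusive regime ($b\gg 1$).} For large $y$ we expand in powers of $e^{-2y}=e^{-\pi\sqrt{2b}}$. The exponential forms
\begin{equation*}
\sinh^2y+\sin^2y=\tfrac14 e^{2y}\bigl(1-2e^{-2y}\cos 2y+e^{-4y}\bigr)
\end{equation*}
and the analogues for $\sinh 2y\pm\sin 2y$ yield $(\sinh 2y+\sin 2y)/(\sinh^2y+\sin^2y)\to 2$ at leading order, while $\sinh 2y\sin 2y/(\sinh^2y+\sin^2y)^2\sim 8\sin(2y)\,e^{-2y}$. Inserting these into the formula $\mathrm{Re}\,\mathcal{S}_1(b)=-\tfrac{\pi}{4\sqrt 2\,b^{3/2}}B_1-\tfrac{\pi^2}{4b}B_2$ derived in the proof of Theorem~\ref{Th}(i) gives the leading $-\pi/(2\sqrt 2\,b^{3/2})$ together with an oscillating exponential correction of relative size $\sqrt b\,e^{-\pi\sqrt{2b}}\sin(\pi\sqrt{2b})$, which after conversion $b\mapsto\omega$ produces the announced main term and its $\sin(\pi\sqrt{2b})$ oscillation.

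\textbf{Main obstacle.} The delicate point in the diffusive regime is propagating the $O(\omega+\eta/\omega)$ correction in $\zeta$, and hence in $b$, through the exponent $e^{-2\pi\sqrt{\zeta b}}$. The shift $\zeta=i(1+O(\omega+\eta/\omega))$ translates into an additive perturbation of $\sqrt{\zeta b}$ of order $\sqrt b\,(\omega+\eta/\omega)$, producing a multiplicative error $1+O(\sqrt b(\omega+\eta/\omega))$ in $e^{-2\pi\sqrt{\zeta b}}$. For this error not to dominate the explicit $\sqrt b\,e^{-\pi\sqrt{2b}}$-sized correction, one needs $\sqrt b(\omega+\eta/\omega)\ll 1$, i.e.\ $b\ll(\omega+\eta/\omega)^{-2}$, which is exactly the upper bound imposed in the diffusive regime of the corollary. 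All remaining error terms (from Taylor/exponential truncation, the error of Theorem~\ref{Th}(i), and the $W^{-c_0}$ contributions) propagate by direct substitution into the error list of the statement.
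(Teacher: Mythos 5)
Your proposal is correct and follows essentially the same route as the paper: specialise the exact expression of Theorem~\ref{Th}(i) (equivalently Proposition~\ref{propTr1}) by expanding the hyperbolic/trigonometric bracket for $b\ll1$ and in powers of $e^{-\pi\sqrt{2b}}$ for $b\gg1$, then convert $b$ back to $\omega$ via~(\ref{zeta}). The only cosmetic difference is that in the mean-field case you extract the constant $\pi^4/45=\sum_{n\neq0}n^{-4}$ directly from the series $\mathcal{S}_1(b)$ rather than from the Taylor expansion of the closed form, and your remark on why the exponent forces the constraint $b\ll(\eta/\omega+\omega)^{-2}$ makes explicit a point the paper leaves implicit.
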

\begin{proof}
In the mean-field regime $b\ll1$, $f(b)=-\frac{4\sqrt{2}}{\pi\sqrt{b}}+\frac{4\sqrt{2}}{45}\pi^3b\sqrt{b}+O(b^{7/2})$ so that the following asymptotic formula holds
\begin{equation*}
\mathrm{Tr}\frac{S}{(1-\alpha S)^2}=\frac{1}{\mathcal{D}^2}\left(\frac{L}{2\pi W}\right)^4\left(\frac{1}{(\zeta b)^2}+\frac{\pi^4}{45}+O\left(b+\frac{1}{b^2W}+\frac{W^3}{L^3}u^{-1/2}+\frac{W^2}{L^3}u^{-5/2}\right)\right).
\end{equation*}
In the diffusive regime $1\ll b\ll\min\left((\log(L/W))^2,(\eta/\omega+\omega)^{-2}\right)$,\\
$f(b)=-2-8\sqrt{2}\pi e^{-\pi\sqrt{2b}}\sqrt{b}\sin(\pi\sqrt{2b})+O(e^{-\pi\sqrt{2b}})$, so that the following asymptotic formula holds
\begin{equation*}
\mathrm{Tr}\frac{S}{(1-\alpha S)^2}=\frac{1}{32\pi^3\mathcal{D}^2}\frac{L^4}{W^4}\frac{1}{(\zeta b)^{3/2}}\left(1+4\pi e^{-2\pi\sqrt{\bar{\zeta}b}}\left(\sqrt{\zeta b}+O\left(1+\sqrt{\omega}\frac{L}{W^2}+e^{2\pi\sqrt{\bar{\zeta}b}}\left(u+(uW)^{-1}\right)\right)\right)\right).
\end{equation*}
The results follow from (\ref{theta}) and (\ref{corr}).
\end{proof}

\subsection{Dimension 2}
\label{dim2}
From equation~(\ref{tr}), it follows
\begin{align*}
\mathrm{Tr}\frac{S}{(1-\alpha S)^2}&=\sum_{q\in W\mathbb{T}^*}\frac{\widehat{S}_W(q)}{(1-\alpha\widehat{S}_W(q))^2}\mathbf{1}(|q|\leqslant\epsilon)+O\left(\frac{L^2}{\delta_\epsilon W^2}\right)\\
&=\sum_{q\in\frac{2\pi W}{L}\mathbb{Z}^2}\frac{\mathcal{I}-\mathcal{D}\lvert q\rvert^2}{(1-\alpha\widehat{S}_W(q))^2}\mathbf{1}(|q|\leqslant\epsilon)+O\left(\frac{L^2}{W^2}\right)\\
&=\sum_{q\in\frac{2\pi W}{L}\mathbb{Z}^2}\frac{\mathcal{I}-\mathcal{D}\lvert q\rvert^2}{(1-\alpha+\mathcal{D}\lvert q\rvert^2-\mathcal{Q}(q))^2}\mathbf{1}(|q|\leqslant\epsilon)+O\left(\frac{L^2}{W^2}\right)
\end{align*}
where in the second equality we used the fact that
\begin{equation*}
\sum_{q\in W\mathbb{T}^\ast}\frac{\mathcal{Q}(q)}{\left(1-\alpha\widehat{S}_W(q)\right)^2}\mathbf{1}(\lvert q\rvert\leqslant\epsilon)=O\Bigg(\sum_{q\in W\mathbb{T}^\ast}\frac{\lvert q\rvert^4}{\left(1-\alpha\widehat{S}_W(q)\right)^2}\mathbf{1}(\lvert q\rvert\leqslant\epsilon)\Bigg)=O\left(\frac{L^2}{W^2}\right).
\end{equation*}
Expanding the denominator yields to
\begin{equation*}
\sum_{q\in\frac{2\pi W}{L}\mathbb{Z}^2}\frac{\mathbf{1}(|q|\leqslant\epsilon)}{(1-\alpha+\mathcal{D}\lvert q\rvert^2-\mathcal{Q}(q))^2}=\sum_{q\in\frac{2\pi W}{L}\mathbb{Z}^2}\frac{\mathbf{1}(|q|\leqslant\epsilon)}{(\mathcal{D}\lvert q\rvert^2+u\zeta)^2}\left(1+2\frac{\mathcal{Q}(q)}{\mathcal{D}\lvert q\rvert^2+u\zeta}+O\left(\left(\frac{|q|^4}{|q|^2+u\zeta}\right)^2\right)\right).
\end{equation*}
In \cite{erdHos2015altshulerII}, the authors estimated this trace by using a Riemann sum approximation for each summation. The real part of the first term then vanishes while the next terms give a logarithmic contribution. Here we no longer make the Riemann sum approximation and give a precise estimation of this term. More precisely, rewriting the first summation over the torus $(2\pi W/L)\mathbb{Z}^2$ as a summation over $\mathbb{Z}^2$, it can be expressed in terms of an inhomogeneous Epstein zeta function
\begin{align*}
\mathcal{J}:=\sum_{q\in\frac{2\pi W}{L}\mathbb{Z}^2}\frac{\mathbf{1}(|q|\leqslant\epsilon)}{(\mathcal{D}\lvert q\rvert^2+u\zeta)^2}=\frac{1}{\mathcal{D}^2}\left(\frac{L}{2\pi W}\right)^4\mathcal{S}_2(b,R)
\end{align*}
where
\begin{equation*}
\mathcal{S}_2(b,R):=\sum_{n\in\mathbb{Z}^2}\frac{\mathbf{1}(|n|\leqslant R)}{(\lvert n\rvert^2+\zeta b)^2}.
\end{equation*}
To express the leading term in terms of $\mathcal{S}_2(b)$ defined by
\begin{equation*}
\mathcal{S}_2(b):=\sum_{n\in\mathbb{Z}^2}\frac{1}{(\lvert n\rvert^2+ib)^2}
\end{equation*}
we expand $\zeta$ using (\ref{zeta}) implying that
\begin{align*}
\mathcal{J}=\frac{1}{\mathcal{D}^2}\left(\frac{L}{2\pi W}\right)^4\left(\mathcal{S}_2(b)+O\left(\left(\sum_{n\in\mathbb{Z}^2}\frac{1}{(|n|^2+ib)^3}+R^{-4}\right)\left(b(\omega+\eta+\eta^2/\omega^2)\right)+\frac{u}{R^2}\right)\right).
\end{align*}
We have
\begin{equation*}
\sum_{n\in\mathbb{Z}^2}\frac{1}{(|n|^2+ib)^3}=\frac{1}{b^3}\sum_{m\in b^{-1/2}\mathbb{Z}^2}\frac{1}{(m^2+i)^3}=\frac{2\pi}{b^2}\int_0^\infty\!\rd r\frac{r}{(r^2+i)^3}+O(b^{-5/2})=O\left(\frac{1}{b^2}\right).
\end{equation*}
Thus
\begin{equation*}
\mathcal{J}=\frac{1}{\mathcal{D}^2}\left(\frac{L}{2\pi W}\right)^4\mathcal{S}_2(b)+O\left(\frac{L^2}{W^2}\left(1+\frac{\eta^2}{\omega^3}\right)\right)
\end{equation*}
and
\begin{equation}
\label{trace2}
\mathrm{Tr}\frac{S}{(1-\alpha S)^2}=\frac{1}{\mathcal{D}^2}\left(\frac{L}{2\pi W}\right)^4\mathcal{S}_2(b)+O\left(\frac{L^2}{W^2}\left(\lvert\log{u}\rvert+\frac{\eta^2}{\omega^3}\right)\right),\quad b\gg1.
\end{equation}
\begin{proposition}
\label{ReS2}
The real part of $\mathcal{S}_2(b)$ is given by
\begin{equation*}
\mathrm{Re}\,\mathcal{S}_2(b)=-\frac{1}{b^2}+\frac{2}{b^2}\sum_{n=0}^\infty(-1)^n\left(1-\left(\frac{\pi b/(2n+1)}{\sinh(\pi b/(2n+1))}\right)^{\!2}\right).
\end{equation*}
\end{proposition}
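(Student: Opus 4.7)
The plan is to convert $\mathcal{S}_2(b)$ into a one-dimensional integral against the square of a Jacobi theta function, expand that square as a Lambert series via Jacobi's two-square theorem, and reduce everything to a single integral in closed form. The odd integers $2n+1$ and alternating signs in the statement will be traced to the arithmetic identity $r_2(N)=4\sum_{d\mid N,\,d\text{ odd}}(-1)^{(d-1)/2}$.

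First I would isolate the $n=0$ term (contributing $-1/b^2$) and, for $n\ne 0$, use the Schwinger-type representation $(|n|^2+ib)^{-2}=\int_0^\infty t\,e^{-t(|n|^2+ib)}\,\rd t$. Since $\sum_{n\ne 0}|n|^{-4}<\infty$ in two dimensions, Fubini delivers
\[\mathcal{S}_2(b)+\frac{1}{b^2}=\int_0^\infty t\,e^{-ibt}\bigl(\vartheta(t)^2-1\bigr)\,\rd t,\qquad \vartheta(t):=\sum_{n\in\mathbb{Z}}e^{-tn^2},\]
and taking real parts (using that $\vartheta(t)^2-1$ is real) gives $\mathrm{Re}\,\mathcal{S}_2(b)+1/b^2=\int_0^\infty t\cos(bt)(\vartheta(t)^2-1)\,\rd t$. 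Applying the Jacobi two-square identity
\[\vartheta(t)^2-1=4\sum_{j=0}^\infty\frac{(-1)^j}{e^{(2j+1)t}-1},\]
together with the alternating-series bound $\bigl|\sum_{j=0}^{N}(-1)^j/(e^{(2j+1)t}-1)\bigr|\le 1/(e^t-1)$ and dominated convergence, I may swap sum and integral; after the substitution $u=(2j+1)t$,
\[\mathrm{Re}\,\mathcal{S}_2(b)+\frac{1}{b^2}=4\sum_{j=0}^\infty\frac{(-1)^j}{(2j+1)^2}\int_0^\infty\frac{u\cos(\beta_j u)}{e^u-1}\,\rd u,\qquad \beta_j:=\frac{b}{2j+1}.\]

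It remains to evaluate $I(\beta):=\int_0^\infty u\cos(\beta u)/(e^u-1)\,\rd u$. Expanding $1/(e^u-1)=\sum_{k\ge 1}e^{-ku}$ termwise yields $I(\beta)=\sum_{k\ge 1}(k^2-\beta^2)/(k^2+\beta^2)^2$; alternatively one differentiates the Mittag-Leffler identity $\sum_{k\ge 1}(k^2+\beta^2)^{-1}=\pi\coth(\pi\beta)/(2\beta)-1/(2\beta^2)$ once with respect to $\beta^2$. Either route produces the closed form
\[I(\beta)=\frac{1}{2\beta^2}-\frac{\pi^2}{2\sinh^2(\pi\beta)},\]
which substituted back with $1/\beta_j^2=(2j+1)^2/b^2$ gives exactly the claimed identity. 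The main obstacle is the middle step: the odd denominators and alternating signs look purely combinatorial, and a direct Poisson-summation attack in one coordinate (as in dimension $1$) produces complex-valued $\coth$ and $\sinh^{-2}$ at arguments $\pi\sqrt{n_2^2+ib}$ whose real parts resist clean reorganisation; once the Jacobi two-square identity is brought in, the remainder of the argument is standard theta-function and Mittag-Leffler manipulation plus routine convergence checks.
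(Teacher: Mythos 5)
Your proposal is correct and takes essentially the same route as the paper: the Laplace representation of $\mathcal{S}_2(b)$ through $\vartheta(t)^2$, the Jacobi two-square (Lambert series) identity, and the differentiated cotangent identity for $\mathrm{Re}\sum_{k\geqslant1}(k+i\beta)^{-2}$ are precisely the paper's three steps, merely with real parts taken earlier and the inner sum packaged as $I(\beta)=\int_0^\infty u\cos(\beta u)(e^u-1)^{-1}\,\rd u$. The only nit is your parenthetical alternative evaluation of $I(\beta)$: differentiating the Mittag--Leffler identity with respect to $\beta^2$ yields $\sum_{k\geqslant1}(k^2+\beta^2)^{-2}$ rather than $\sum_{k\geqslant1}(k^2-\beta^2)(k^2+\beta^2)^{-2}$ (one should instead differentiate $\beta\sum_{k\geqslant1}(k^2+\beta^2)^{-1}$ with respect to $\beta$), but your primary termwise computation already gives the correct closed form.
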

\begin{proof}
Let $\theta$ be the third Jacobi theta function defined by $\theta(t)=\sum_{n\in\mathbb{Z}}e^{-n^2t}$. Then $\mathcal{S}_2(b)$ is the Laplace-Mellin transform of the squared Jacobi theta function
\begin{equation*}
\mathcal{S}_2(b)=\int_0^\infty\!\rd t\sum_{n\in\mathbb{Z}^2}e^{-(n_1^2+n_2^2+ib)t}t=\int_0^\infty\rd t\,\theta(t)^2e^{-ibt}t
\end{equation*}
which can be rewritten in a more convenient way using a Jacobi identity (see e.g. \cite{MR316780})
\begin{equation*}
\theta(t)^2=1+4\sum_{m=1}^\infty\sum_{n=0}^\infty(-1)^ne^{-m(2n+1)t}.
\end{equation*}
Hence
\begin{equation*}
\mathcal{S}_2(b)=-\frac{1}{b^2}+4\sum_{n=0}^\infty\frac{(-1)^n}{(2n+1)^2}\sum_{m=1}^\infty\frac{1}{(m+ib/(2n+1))^2}.
\end{equation*}
To calculate the second sum, we rewrite equation~(\ref{sum1}) as
\begin{equation*}
\frac{1}{z}+i\sum_{m=1}^\infty\left(\frac{1}{m+iz}-\frac{1}{m-iz}\right)=\pi\coth(\pi z)
\end{equation*}
which gives by differentiation with respect to $z$
\begin{equation*}
-\frac{1}{z^2}+\sum_{m=1}^\infty\left(\frac{1}{(m+iz)^2}+\frac{1}{(m-iz)^2}\right)=-\frac{\pi^2}{\sinh^2(\pi z)},
\end{equation*}
or similarly, for real $z$,
\begin{equation}
\label{sum2}
\mathrm{Re}\,\sum_{m=1}^\infty\frac{1}{(m+iz)^2}=\frac{1}{2z^2}-\frac{\pi^2}{2\sinh^2(\pi z)}.
\end{equation}
The result follows by replacing $z$ by $b/(2n+1)$ in (\ref{sum2}).
\end{proof}
\begin{proposition}
\label{propS2}
For $b\gg1$, the real part of $\mathcal{S}_2(b)$ has the following asymptotic behaviour
\begin{equation*}
\mathrm{Re}\,\mathcal{S}_2(b)=-4\pi^2e^{-\pi\sqrt{2b}}b^{-3/4}\sin\left(\pi\sqrt{2b}-\frac{\pi}{8}\right)+O(e^{-\pi\sqrt{2b}}b^{-11/4}).
\end{equation*}
\end{proposition}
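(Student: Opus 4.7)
The plan is to build on the proof of Proposition~\ref{ReS2} by applying the Jacobi modular transformation to the theta integral representation before invoking the Jacobi identity. Starting from $\mathcal{S}_2(b)=\int_0^\infty\theta(t)^2\,e^{-ibt}\,t\,\rd t$ (as in the derivation of Proposition~\ref{ReS2}), I would use $\theta(t)=\sqrt{\pi/t}\,\theta(\pi^2/t)$ to rewrite $\theta(t)^2=(\pi/t)\,\theta(\pi^2/t)^2$, and then expand $\theta(\pi^2/t)^2$ via the same Jacobi identity $\theta(s)^2=1+4\sum_{n\geq 0,\,m\geq 1}(-1)^n e^{-m(2n+1)s}$. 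Substituting and applying the classical Bessel identity $\int_0^\infty e^{-\alpha/t-\beta t}\,\rd t=2\sqrt{\alpha/\beta}\,K_1(2\sqrt{\alpha\beta})$ with $\alpha=m(2n+1)\pi^2$ and $\beta=ib$ (principal branch, $\sqrt{ib}=e^{i\pi/4}\sqrt b$), one obtains
\[\mathcal{S}_2(b)=-\frac{i\pi}{b}+8\pi^2\sum_{n\geq 0,\,m\geq 1}(-1)^n\sqrt{\frac{m(2n+1)}{ib}}\,K_1\!\left(2\pi\sqrt{m(2n+1)\,ib}\right),\]
where the first term is purely imaginary and contributes nothing to the real part.

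The real part of the $(n,m)$-th term in the sum decays like $e^{-\pi\sqrt{2m(2n+1)b}}$, so for $b\gg 1$ the unique pair $(n,m)=(0,1)$ with $m(2n+1)=1$ dominates, all other pairs being exponentially smaller in $\sqrt b$. Inserting the large-argument expansion $K_1(z)=\sqrt{\pi/(2z)}\,e^{-z}(1+3/(8z)+O(z^{-2}))$ with $z=2\pi e^{i\pi/4}\sqrt b$, the $(0,1)$ contribution to the real part becomes $4\pi^2 b^{-3/4}e^{-\pi\sqrt{2b}}\cos(3\pi/8+\pi\sqrt{2b})(1+O(b^{-1/2}))$. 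The trigonometric identity $\cos(3\pi/8+\theta)=-\sin(\theta-\pi/8)$, which follows from $\cos(3\pi/8)=\sin(\pi/8)$ and $\sin(3\pi/8)=\cos(\pi/8)$, rewrites this as the announced leading term $-4\pi^2 b^{-3/4}e^{-\pi\sqrt{2b}}\sin(\pi\sqrt{2b}-\pi/8)$.

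For the error, contributions from pairs with $m(2n+1)\geq 2$ are bounded by $O(e^{-\sqrt 2\pi\sqrt{2b}})$, which is exponentially smaller in $\sqrt b$ than $e^{-\pi\sqrt{2b}}$ and easily absorbed. The subleading terms in the $K_1$ expansion at $(0,1)$ give corrections of the form $b^{-3/4-k/2}e^{-\pi\sqrt{2b}}$ with explicit cosine phases $\cos(3\pi/8+k\pi/4+\pi\sqrt{2b})$. The main technical obstacle is to carry the $K_1$ expansion to sufficient order — together with a uniform remainder bound for $K_1$ in the sector $\arg z=\pi/4$ — so as to verify the stated $O(e^{-\pi\sqrt{2b}}b^{-11/4})$ error.
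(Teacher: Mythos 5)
Your route is genuinely different from the paper's. The paper stays with the representation of Proposition~\ref{ReS2}, applies the Abel--Plana formula to the alternating series $\sum_n(-1)^ng_b(n)$, and evaluates the resulting integral by a saddle-point analysis at $y_0=\sqrt{b}e^{-i\pi/4}+i/2$. You instead apply the modular transformation $\theta(t)=\sqrt{\pi/t}\,\theta(\pi^2/t)$ \emph{before} expanding, which converts $\mathcal{S}_2(b)$ into the exact Bessel series
\begin{equation*}
\mathcal{S}_2(b)=-\frac{i\pi}{b}+2\pi^2\sum_{k\geqslant1}r_2(k)\sqrt{\frac{k}{ib}}\,K_1\!\left(2\pi\sqrt{kib}\right),
\end{equation*}
where $r_2(k)$ is the number of representations of $k$ as a sum of two squares; your double sum over $(n,m)$ is the same object via $r_2(k)=4\sum_{d\mid k,\,d\ \mathrm{odd}}(-1)^{(d-1)/2}$. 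This is arguably cleaner: the asymptotics are read off from the standard expansion of $K_1$ rather than produced by a saddle-point computation, the tail $k\geqslant2$ is trivially $O(e^{-2\pi\sqrt{b}})$, and your leading term, including the phase identity $\cos(3\pi/8+\pi\sqrt{2b})=-\sin(\pi\sqrt{2b}-\pi/8)$, reproduces the proposition (and independently confirms its sign). Two points should be made rigorous but are routine: the $k=0$ term and the term-by-term integration require the regularisation $b\to b-i0$, exactly as in the paper's own Laplace--Mellin representation, and $\sqrt{ib}$ must be the principal branch so that $\arg z=\pi/4$ lies well inside the sector where the $K_1$ expansion is valid.

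The genuine problem is the step you defer to the end: carrying the $K_1$ expansion ``to sufficient order'' does not verify the stated remainder --- it refutes it. From $K_1(z)=\sqrt{\pi/(2z)}\,e^{-z}\bigl(1+\tfrac{3}{8z}+O(z^{-2})\bigr)$ with $z=2\pi e^{i\pi/4}\sqrt{b}$, the $k=1$ term contributes to the real part
\begin{equation*}
-4\pi^2e^{-\pi\sqrt{2b}}b^{-3/4}\sin\!\left(\pi\sqrt{2b}-\frac{\pi}{8}\right)-\frac{3\pi}{4}e^{-\pi\sqrt{2b}}b^{-5/4}\sin\!\left(\pi\sqrt{2b}+\frac{\pi}{8}\right)+O\!\left(e^{-\pi\sqrt{2b}}b^{-7/4}\right),
\end{equation*}
and the second term does not vanish: it is of order $e^{-\pi\sqrt{2b}}b^{-5/4}$, far larger than the claimed $O(e^{-\pi\sqrt{2b}}b^{-11/4})$. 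So your method proves the proposition only with the weaker (and, as far as I can tell, correct) remainder $O(e^{-\pi\sqrt{2b}}b^{-5/4})$. The same over-optimism occurs in the paper's own proof: in (\ref{S2asympt}) the saddle-point error is recorded as $j(y_0)+O(b^{-1})$ with $j(y_0)\asymp b$, i.e.\ a relative error $O(b^{-2})$, whereas the standard first saddle-point correction (built from $h'''$, $h''''$, $j'$, $j''$) is of relative order $b^{-1/2}$ here, in agreement with your Bessel computation. You should therefore state the remainder as $O(e^{-\pi\sqrt{2b}}b^{-5/4})$; this does not affect Theorem~\ref{Th}(ii), whose error budget already contains the much larger $e^{-\pi\sqrt{2b}}b^{1/4}$.
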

\begin{proof}
Given an analytic function $g$ in $\{z\in\mathbb{C}\vert\ \mathrm{Re}(z)\geqslant0\}$ such that
\begin{equation*}
(i)\qquad\lim_{y\rightarrow\infty}\lvert g(x\pm iy)\rvert e^{-2\pi y}=0
\end{equation*}
uniformly in $x$ on every finite inverval in $\left[0,\infty\right.)$ and such that
\begin{equation*}
(ii)\qquad\int_0^\infty\!\rd y\,\lvert g(x+iy)-g(x-iy)\rvert e^{-2\pi y}
\end{equation*}
exists for all $x\geqslant0$ and tends to $0$ as $x\rightarrow\infty$, then Abel-Plana summation formula gives an integral representation of an alternating series through the following relation (see e.g. \cite{MR2793463})
\begin{equation*}
\sum_{n=0}^\infty(-1)^ng(n)=\frac{1}{2}g(0)+i\int_0^\infty\!\rd y\,\frac{g(iy)-g(-iy)}{2\sinh(\pi y)}.
\end{equation*}
Define $g_b(n):=1-\left(\frac{\pi b/(2n+1)}{\sinh(\pi b/(2n+1))}\right)^{\!2}$ which is analytic in $\{z\in\mathbb{C}\vert\ \mathrm{Re}(z)\geqslant0\}$. We have
\begin{align*}
\lvert g_b(x\pm iy)\rvert e^{-2\pi y}&\leqslant\left(1+\frac{(\pi b)^2}{\lvert2x+1\pm2iy\rvert^2}\frac{1}{\lvert\sinh^2(\pi b/(2x+1\pm2iy))\rvert}\right)e^{-2\pi y}\\
&\leqslant\left(1+\frac{(\pi b)^2}{(2x+1)^2+4y^2}\frac{1}{\sinh^2(\pi b(2x+1)/((2x+1)^2+4y^2))}\right)e^{-2\pi y}\\
&\leqslant\left(2+\frac{4y^2}{(2x+1)^2}\right)e^{-2\pi y}\longrightarrow0\quad\textrm{as }y\rightarrow\infty\textrm{ uniformly in }x
\end{align*}
verifying condition $(i)$. Let $\varphi=\pi b(2x+1)/((2x+1)^2+4y^2)$ and $\psi=2\pi by/((2x+1)^2+4y^2)$. Also,
\begin{align*}
&\lvert g_b(x+iy)-g_b(x-iy)\rvert\\
&=(\pi b)^2\Bigg|\frac{1}{(2x+1+2iy)^2}\frac{1}{\sinh^2(\pi b/(2x+1+2iy))}-\frac{1}{(2x+1-2iy)^2}\frac{1}{\sinh^2(\pi b/(2x+1-2iy))}\Bigg|\\
&=\frac{4(\pi b)^2}{((2x+1)^2+4y^2)^2}\frac{1}{(\sinh^2\varphi+\sin^2\psi)^2}\lvert(2x+1)\cosh\varphi\sin\psi-2y\sinh\varphi\cos\psi\rvert\\
&\qquad\qquad\qquad\qquad\qquad\qquad\qquad\qquad\quad\times\lvert(2x+1)\sinh\varphi\cos\psi+2y\cosh\varphi\sin\psi\rvert.
\end{align*}
Using that $\sinh\varphi\geqslant\varphi$, $\sin\psi\leqslant\psi$ and $\cosh\varphi/\sinh^4\varphi\leqslant16\sinh^4(3\varphi/4)$, for all $x,y\geqslant0$, we have
\begin{equation*}
\lvert g_b(x+iy)-g_b(x-iy)\rvert\leqslant\frac{1}{b^2}\left(\gamma_1\frac{y}{2x+1}+\gamma_2\frac{y^3}{(2x+1)^3}\right)\left(\gamma_3+\gamma_4\frac{y^2}{(2x+1)^2}+\gamma_5\frac{y^5}{(2x+1)^5}\right)
\end{equation*}
for some positive constants $\gamma_i$, $i=1,\ldots,5$, which shows that condition $(ii)$ is satisfied. Hence Abel-Plana summation formula implies that
\begin{equation}
\label{S2}
\sum_{n=0}^\infty(-1)^n\left(1-\left(\frac{\pi b/(2n+1)}{\sinh(\pi b/(2n+1))}\right)^{\!2}\right)=\frac{1}{2}-\frac{(\pi b)^2}{2\sinh^2(\pi b)}+i\int_0^\infty\!\rd y\,\frac{g_b(iy)-g_b(-iy)}{2\sinh(\pi y)}.
\end{equation}
We have
\begin{align*}
i\int_0^\infty\!\rd y\,\frac{g_b(iy)-g_b(-iy)}{2\sinh(\pi y)}&=-2\,\mathrm{Im}\int_0^\infty\!\rd y\,\frac{g_b(iy)}{2\sinh(\pi y)}\\
&=8\,\mathrm{Im}\int_0^\infty\!\rd y\,\left(\frac{\pi b}{2iy+1}\right)^{\!2}\frac{e^{-\pi b(2/(2iy+1)+y/b)}}{1+e^{-4\pi b/(2iy+1)}-2e^{-2\pi b/(2iy+1)}}\frac{1}{1-e^{-2\pi y}}\\
&=:8\,\mathrm{Im}\,I(b).
\end{align*}
Let $h(y):=-\pi(2/(2iy+1)+y/b)$ and $$j(y):=\left(\frac{\pi b}{2iy+1}\right)^{\!2}\frac{1}{1+e^{-4\pi b/(2iy+1)}-2e^{-2\pi b/(2iy+1)}}\frac{1}{1-e^{-2\pi y}}.$$ The function $\mathrm{Re}\,h(y)$ attains his maximum at $y_0=\sqrt{b}e^{-i\pi/4}+i/2$ which satisfies $h(y_0)= -\pi\sqrt{2/b}+i(\pi\sqrt{2/b}-\pi/(2b))$ and $h''(y_0)=2\pi b^{-3/2}e^{-3i\pi/4}$. From the saddle point method it follows that
\begin{equation}
\label{S2asympt}
I(b)=\sqrt{\frac{2\pi}{-h''(y_0)}}\frac{1}{\sqrt{b}}e^{bh(y_0)}\left(j(y_0)+O(b^{-1})\right)=\frac{\pi^2}{4}e^{-\pi\sqrt{2b}}e^{i(\pi\sqrt{2b}-\pi/8)}\left(b^{5/4}+O(b^{-3/4})\right).
\end{equation}
Putting Proposition~\ref{ReS2}, (\ref{S2}) and (\ref{S2asympt}) together, it comes
\begin{equation*}
\mathrm{Re}\,\mathcal{S}_2(b)=-4\pi^2e^{-\pi\sqrt{2b}}b^{-3/4}\sin\left(\pi\sqrt{2b}-\frac{\pi}{8}\right)+O(e^{-\pi\sqrt{2b}}b^{-11/4}),\quad b\gg1
\end{equation*}
showing that the function $\mathrm{Re}\,\mathcal{S}_2(b)$ changes sign infinitely many times.
\end{proof}
\begin{proof}[Proof of Theorem~\ref{Th}(ii)]
Putting (\ref{corr}), (\ref{theta}), (\ref{trace2}), Proposition~\ref{propS2} and \cite[Proposition $3.3(ii)$]{erdHos2015altshulerII} together proves Theorem~\ref{Th} $(ii)$.
\end{proof}
Below is a plot of $\mathrm{Re}\,\mathcal{S}_2(b)$.
\begin{figure}[H]
\centering
\includegraphics[width=8cm]{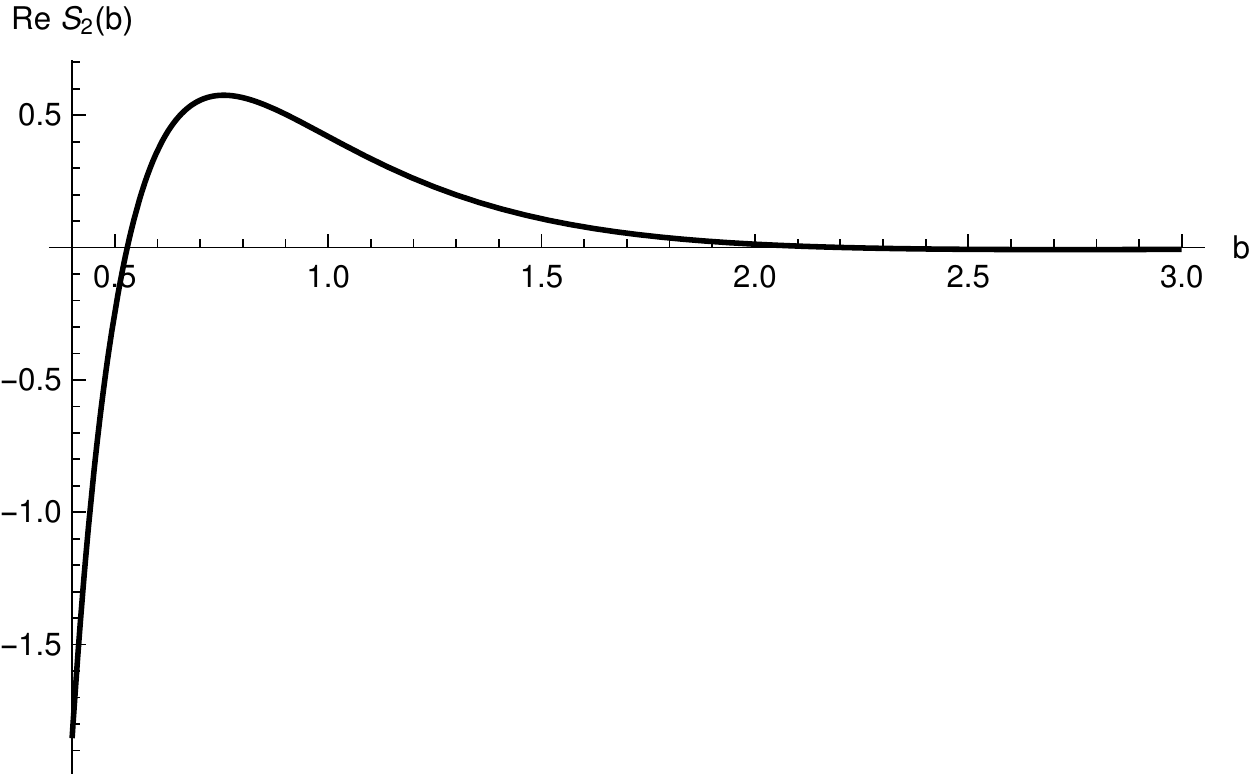}
\end{figure}
And here is a plot of $(4\pi^2)^{-1}b^{3/4}e^{\pi\sqrt{2b}}\,\mathrm{Re}\,\mathcal{S}_2(b)$ compared to its asymptotic expression.
\begin{figure}[H]
\centering
\includegraphics[width=10cm]{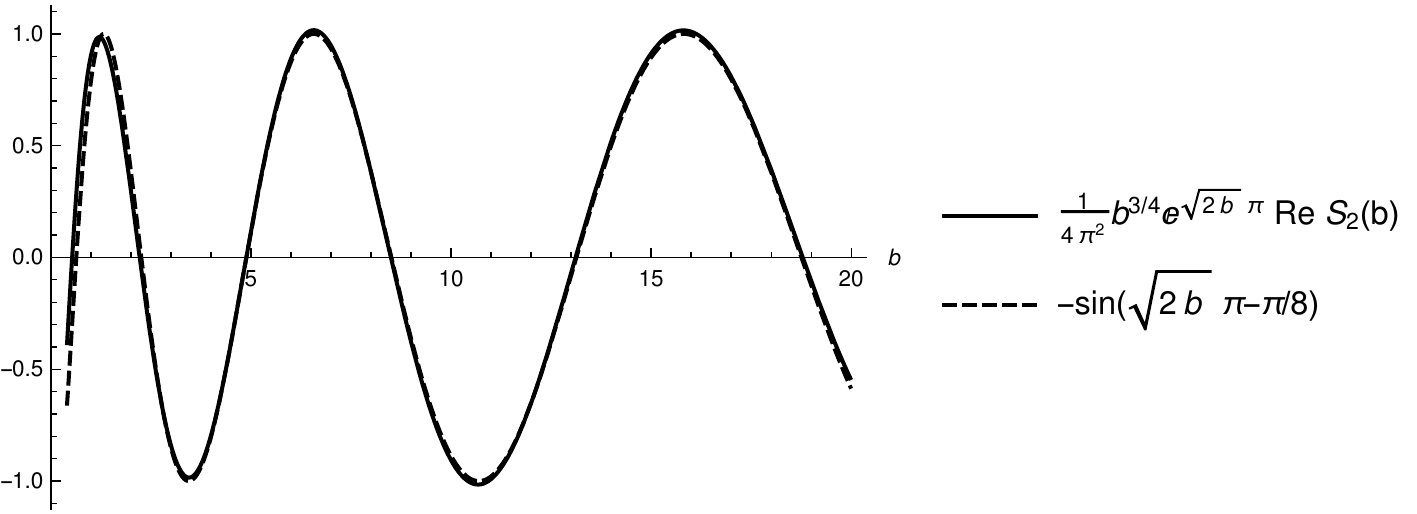}
\end{figure}
\begin{proposition}
\label{sameorder}
The two leading terms of the local density-density correlation function are of the same order for $b\sim(\log(L/W))^2$. More precisely,
\begin{equation*}
\exists\ \gamma\in\left[\frac{1}{2\pi^2},\frac{2}{\pi^2}\right]\textrm{ such that }\frac{L^2}{W^2}b^{-3/2}e^{-\pi\sqrt{2b}}=\lvert\log\omega\rvert\textrm{ where }b=\gamma\left(\log(L/W)\right)^2.
\end{equation*}
\end{proposition}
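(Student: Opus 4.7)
The plan is to apply the intermediate value theorem to the continuous function
\begin{equation*}
F(b) \defeq \frac{L^2}{W^2}\,b^{-3/2}\,e^{-\pi\sqrt{2b}}
\end{equation*}
on the two-point set of candidate parameters $\gamma\in\{1/(2\pi^2),2/\pi^2\}$ parametrised by $b=\gamma\ell^2$, where $\ell\defeq\log(L/W)$. The target value is $|\log\omega|$, and I want to show it lies between the values of $F$ at the two endpoints.

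First I would exploit the key algebraic identity $\pi\sqrt{2b}=\pi\sqrt{2\gamma}\,\ell$, which was engineered into the bounds on $\gamma$: at $\gamma=1/(2\pi^2)$ this equals $\ell$ and at $\gamma=2/\pi^2$ it equals $2\ell$, so the exponential factor cancels exactly one, respectively two, powers of $L/W$. Direct substitution gives
\begin{equation*}
F\bigl(\ell^2/(2\pi^2)\bigr)=(2\pi^2)^{3/2}\,\frac{L}{W}\,\ell^{-3},\qquad F\bigl(2\ell^2/\pi^2\bigr)=(\pi^2/2)^{3/2}\,\ell^{-3},
\end{equation*}
the first tending to $+\infty$ and the second to $0$ as $L/W\to\infty$.

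Second I would verify that $|\log\omega|$ actually sits in the open interval between these two values under the standing assumptions~(\ref{hyp}). The lower inequality $F(2\ell^2/\pi^2)<|\log\omega|$ is immediate since $|\log\omega|\geqslant|\log c_\ast|$ is bounded away from zero while $F(2\ell^2/\pi^2)=O(\ell^{-3})$. For the upper inequality $|\log\omega|<F(\ell^2/(2\pi^2))$, I combine $\omega\gg\eta\gg M^{-1/3}$ with $M\sim W^d$ to obtain $|\log\omega|=O(\log W)$; then I need $(L/W)\,\ell^{-3}\gg\log W$, which holds in the regime of interest $b\sim\ell^2\gg1$ since $W\ll L$ makes $L/W$ grow faster than any power of $\log W$ divided by $\ell^3$.

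Finally, since $F$ is positive and continuous in $b$ on $[\ell^2/(2\pi^2),2\ell^2/\pi^2]$, the intermediate value theorem produces a $\gamma\in[1/(2\pi^2),2/\pi^2]$ with $F(\gamma\ell^2)=|\log\omega|$, as desired. The only mildly delicate step is the upper bound on $|\log\omega|$ in the second paragraph, which requires combining several of the mesoscopic assumptions; but this is a comparison of logarithms and powers rather than a substantive estimate, so no genuine analytic difficulty is expected.
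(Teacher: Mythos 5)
Your overall strategy is exactly the paper's: evaluate $F(b)=\frac{L^2}{W^2}b^{-3/2}e^{-\pi\sqrt{2b}}$ at $b=\gamma\ell^2$ for $\gamma=1/(2\pi^2)$ and $\gamma=2/\pi^2$ (where the exponential cancels one, respectively two, powers of $L/W$), check that $\lvert\log\omega\rvert$ lies between the two values, and invoke the intermediate value theorem. Your endpoint computations $F(\ell^2/(2\pi^2))=(2\pi^2)^{3/2}(L/W)\ell^{-3}$ and $F(2\ell^2/\pi^2)=(\pi^2/2)^{3/2}\ell^{-3}$ are correct, and the lower comparison ($\lvert\log\omega\rvert\geqslant\lvert\log c_\ast\rvert$ versus a quantity of order $\ell^{-3}$) is fine.

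There is, however, a genuine gap in the upper comparison. You reduce it to the claim $(L/W)\ell^{-3}\gg\log W$ and assert that this follows because ``$W\ll L$ makes $L/W$ grow faster than any power of $\log W$''. That is false under the standing assumptions: $W\ll L$ only forces $L/W\to\infty$, and for instance $L=W\log W$ is admissible (it satisfies $W\ll L\leqslant W^C$ and is compatible with $b\sim\ell^2\gg1$), in which case $(L/W)\ell^{-3}=\log W/(\log\log W)^3$ is \emph{not} $\gg\log W$. The conclusion still holds in that regime, but only because your intermediate bound $\lvert\log\omega\rvert=O(\log W)$ is far too lossy there. The correct input --- and the one the paper uses, in the guise of the constraint $c<2a$ after writing $L=W^{1+a}$, $\omega=W^{-c}$ --- is that $b\gtrsim1$ together with $b\asymp\omega L^2/W^2$ forces $\omega\gtrsim W^2/L^2$, hence $\lvert\log\omega\rvert\leqslant2\ell+O(1)$. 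With that bound the upper inequality becomes $e^\ell\gg\ell^4$, which holds for $\ell\gg1$; the paper even verifies the sharper statement $e^\ell>\ell^4/(\sqrt{2}\pi^3)$ for all $\ell>0$ (via the auxiliary function $g$ and the numerical fact $4!<\sqrt{2}\pi^3$), so that the endpoint $\gamma=1/(2\pi^2)$ works uniformly and not merely asymptotically. Replace your second-paragraph bound on $\lvert\log\omega\rvert$ by $\lvert\log\omega\rvert\leqslant2\log(L/W)+O(1)$ and the proof closes.
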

\begin{proof}
Let $L=W^{1+a}$ and $\omega=W^{-c}$ for some positive constants $a$ and $c$. Using that $b\sim\omega L^2/W^2>1$, it implies that $2a>c$. Write $b=\gamma_0(\log W)^2$ for some $\gamma_0$, then the solution to the equation below
\begin{equation*}
\frac{L^2}{W^2}b^{-3/2}e^{-\pi\sqrt{2b}}=\lvert\log\omega\rvert
\end{equation*}
corresponds to the root of the following function
\begin{equation*}
f(\gamma_0):=W^{2a-\pi\sqrt{2\gamma_0}}-c\gamma_0^{3/2}(\log W)^4
\end{equation*}
which is decreasing in $\gamma_0$ and is such that $f(a^2/(2\pi^2))>0$. Indeed
\begin{equation*}
f\left(\frac{a^2}{2\pi^2}\right)=W^a-\frac{ca^3}{2^{3/2}\pi^3}(\log W)^4>g(a)
\end{equation*}
where $$g(a):=e^{a\beta_1}-\beta_2a^4,\quad\beta_1:=\log W,\quad\beta_2:=\frac{(\log W)^4}{\sqrt{2}\pi^3}.$$Thus $g^\mathrm{\romannumeral 5}(a)=\beta_1^5e^{a\beta_1}>0$ implying that $g^\mathrm{\romannumeral 4}$ is increasing, and $g^\mathrm{\romannumeral 4}(0)=\beta_1^4-24\beta_2=(\log W)^4(1-24/(\sqrt{2}\pi^3))>0$, so that $g^\mathrm{\romannumeral 4}(a)>0$ for all $a>0$. We deduce that $g(a)>0$ for all $a>0$. Also we have
\begin{align*}
f\left(\frac{2a^2}{\pi^2}\right)&=1-\frac{2^{3/2}ca^3}{\pi^3}(\log W)^4\\
&=1-\frac{2^{3/2}}{\pi^3}\log W^c(\log W^a)^3\\
&=1-\frac{2^{3/2}}{\pi^3}\lvert\log\omega\rvert\left(\log\frac{L}{W}\right)^3<0.
\end{align*}
Thus there is a constant $\gamma_0\in\left[\frac{1}{2\pi^2},\frac{2}{\pi^2}\right]$ such that $f(\gamma_0a^2)=0$ showing the proposition.
\end{proof}
As a consequence of the above proposition we deduce the following corollary.
\begin{corollary}
\label{alt}
In dimension $2$, the densities of states are alternately positively and negatively correlated $O(\log(L/W))$ times.
\end{corollary}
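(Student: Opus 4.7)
The plan is to combine Theorem~\ref{Th}(ii), which exhibits the oscillatory factor $\sin(\pi\sqrt{2b}-\pi/8)$ in the leading term of the correlation function, with Proposition~\ref{sameorder}, which identifies the threshold $b_\ast\sim(\log(L/W))^2$ beyond which the exponentially decaying oscillatory contribution is dominated by the logarithmic term $(Q-1)\lvert\log\omega\rvert$. Counting zero-crossings of the sine inside the window where it dominates will then give the claimed $O(\log(L/W))$ sign alternations.

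Concretely, I would first isolate the sign-determining bracket in Theorem~\ref{Th}(ii), namely
\[
\mathcal{B}(b) := \frac{L^2}{\pi\mathcal{D}W^2}\,e^{-\pi\sqrt{2b}}\,b^{-3/4}\sin\!\left(\pi\sqrt{2b}-\frac{\pi}{8}\right) - (Q-1)\lvert\log\omega\rvert + \mathcal{E}(b),
\]
where $\mathcal{E}(b)$ collects the $O(\cdot)$ error terms from Theorem~\ref{Th}(ii), and check that each term in $\mathcal{E}(b)$ is subleading with respect to the oscillatory amplitude $(L^2/W^2)\,e^{-\pi\sqrt{2b}}b^{-3/4}$ throughout the diffusive regime; this is a routine substitution using $b\sim\omega L^2/W^2$. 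Proposition~\ref{sameorder} then provides $b_\ast=\gamma(\log(L/W))^2$ with $\gamma\in[\tfrac{1}{2\pi^2},\tfrac{2}{\pi^2}]$, and for $b$ a fixed factor below $b_\ast$ the sign of $\mathcal{B}(b)$, and therefore the sign of the correlation function (up to the overall negative prefactor in Theorem~\ref{Th}(ii)), coincides with that of $\sin(\pi\sqrt{2b}-\pi/8)$.

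The final step is a counting argument. The zeros of $\sin(\pi\sqrt{2b}-\pi/8)$ lie at $b_k=\tfrac12(k+\tfrac18)^2$ for $k\in\mathbb{Z}_{\geq 0}$, and the sine changes sign across each consecutive pair. The number of such $b_k$ lying in a window $[1,\,c b_\ast]$ is $\sqrt{2cb_\ast}+O(1)=\Theta(\log(L/W))$ for any fixed $c>0$, which yields that many genuine sign alternations of the correlation function as $b$ traverses this window, matching the $O(\log(L/W))$ claim.

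The main obstacle I foresee is the quantitative verification that the oscillatory term strictly dominates $(Q-1)\lvert\log\omega\rvert$ and $\mathcal{E}(b)$ between consecutive $b_k$'s \emph{close} to $b_\ast$, since there the two leading contributions compete and the sign cannot be read off from the sine alone. Since Proposition~\ref{sameorder} only locates $b_\ast$ up to a factor of four, it suffices to restrict the count to a sub-interval $[1,cb_\ast]$ with $c$ small enough; this is a harmless constant loss and preserves the $\Theta(\log(L/W))$ order of sign alternations.
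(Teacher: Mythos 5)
Your proposal is correct and follows essentially the same route as the paper: the corollary is deduced by combining the oscillatory leading term $e^{-\pi\sqrt{2b}}b^{-3/4}\sin(\pi\sqrt{2b}-\pi/8)$ from Theorem~\ref{Th}(ii) with the crossover scale $b\sim(\log(L/W))^2$ identified in Proposition~\ref{sameorder}, and then counting the $\Theta(\log(L/W))$ sign changes of the sine for $b$ below that scale. Your additional care about restricting to $[1,cb_\ast]$ with $c$ small and checking that the error terms are subleading is a sensible tightening of what the paper leaves implicit.
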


\noindent\textbf{Acknowledgements:} The author thanks Antti Knowles for suggesting this problem to her and useful discussions. The author gratefully acknowledges the anonymous referees for a careful reading of the manuscript and valuable comments which greatly improved the readability of the paper.\\

\noindent Conflicts of Interest: The author declares no conflicts of interest.\\

\noindent Data availability: Data sharing is not applicable to this article as no datasets were generated or
analyzed during the current study.

\section*{Appendix}
For the sake of comprehensiveness, we present the main steps of the proof of relation~(\ref{theta}) following \cite{erdHos2015altshulerII}. For more details we refer to Section~$3$ of Erd{\H{o}}s and Knowles' paper. Let introduce the following parameters $\rho\in(0,1/3)$, $\mu$ such that $\rho<\mu<1/3$ and $\delta>0$ satisfying $2\delta<\mu-\rho<3\delta$ and write $\eta:=M^{-\rho}$. We also introduce the following notations taken from \cite{erdHos2015altshulerII}
\begin{equation*}
a_n(t):=\sum_{k\geqslant0}\frac{\alpha_{n+2k}(t)}{(M-1)^k},\quad\alpha_k(t):=2(-i)^k\frac{k+1}{t}J_{k+1}(t),
\end{equation*}
$J_\nu$ denoting the $\nu$-th Bessel function of the first kind. For $n\in\mathbb{N}$, $E\in\mathbb{R}$ and $\phi$ a test function, let $\gamma_n(E)$ and $\widetilde{\gamma}_n(E,\phi)$ be defined by
\begin{equation*}
\gamma_n(E):=\int_0^{\infty}\rd t\,e^{iEt}a_n(t),\quad\widetilde{\gamma}_n(E,\phi):=\int_0^{M^{\rho+\delta}}\rd t\,e^{iEt}\widehat{\phi}(\eta t)a_n(t)
\end{equation*}
$\widehat{\phi}$ being the Fourier transform of $\phi$
\begin{equation*}
\widehat{\phi}(t)=\frac{1}{2\pi}\int_{\mathbb{R}}\rd E\ \!e^{iEt}\phi(E).
\end{equation*}
Then from \cite[Lemma 3.2]{erdHos2015altshulerI}
\begin{equation*}
\gamma_n(E)=\frac{2(-i)^ne^{i(n+1)\arcsin{E}}}{1+(M-1)^{-1}e^{2i\arcsin{E}}}.
\end{equation*}
The leading term $\Theta_{\phi_1,\phi_2}^\eta$ is defined by \cite[(4.60)]{erdHos2015altshulerII}
\begin{equation}
\label{thetadef}
\Theta^\eta_{\phi_1,\phi_2}(E_1,E_2):=\frac{W^d}{L^d}\frac{\mathcal{V}_{\textrm{main}}}{\langle Y_{\phi_1}^\eta(E_1)\rangle\langle Y_{\phi_2}^\eta(E_2)\rangle}
\end{equation}
where the main term $\mathcal{V}_{\textrm{main}}$ is given in \cite[(3.23)]{erdHos2015altshulerII}
\begin{align*}
\mathcal{V}_{\textrm{main}}=\sum_{b_1,b_2\geqslant0}\sum_{(b_3,b_4)\in\mathcal{A}}\mathbf{1}&\left(\sum_{i=1}^4b_i\leqslant M^\mu/2\right)\\
&\times2\mathrm{Re}\,(\widetilde{\gamma}_{2b_1+b_3+b_4}(E_1,\phi_1))\mathrm{Re}\,(\widetilde{\gamma}_{2b_2+b_3+b_4}(E_2,\phi_2))\mathcal{I}^{b_1+b_2}\mathrm{Tr}\,S^{b_3+b_4}
\end{align*}
where $\mathcal{A}$ is the set $\mathcal{A}:=(\{1,2,\ldots\}\times\{0,1,\ldots\})\backslash\{(2,0),(1,1)\}$. Let $\psi_i(E):=\phi_i(-E)$, $i=1,2$, where $\phi_i$, $i=1,2$, are test functions satisfying conditions \textbf{(C)}. It is shown that the above expands as \cite[(3.64)]{erdHos2015altshulerII}
\begin{align}
\label{vmain}
\mathcal{V}_{\textrm{main}}=\sum_{b_1,b_2=0}^{\lfloor M^\mu\rfloor-1}\sum_{(b_3,b_4)\in\mathcal{A}_\mu}2\,\textrm{Re}\,(\gamma_{2b_1+b_3+b_4}\ast\psi_1^{\leqslant,\eta})(E_1)\,2\,\textrm{Re}\,(\gamma_{2b_2+b_3+b_4}&\ast\psi_2^{\leqslant,\eta})(E_2)\,\mathcal{I}^{b_1+b_2}\textrm{Tr}\,S^{b_3+b_4}\nonumber\\
&+O_q(L^dM^{-q})
\end{align}
where $\psi_i^{\leqslant,\eta}(E):=\eta^{-1}\psi_i(\eta^{-1}E)\chi(M^{-\tau/2}E)$, $i=1,2$, $\chi$ being a smooth non-negative symmetric function bounded by $1$ satisfying $\chi(E)=1$ for $\lvert E\rvert\leqslant1$ and $\chi(E)=0$ for $\lvert E\rvert\geqslant2$ and $\tau$ is a positive constant such that $\eta\leqslant M^{-\tau}\omega$. The set $\mathcal{A}_\mu$ is the subset $\mathcal{A}_\mu:=(\{1,2,\ldots,\lfloor M^\mu\rfloor\}\times\{0,1,\ldots,\lfloor M^\mu\rfloor-1\})\backslash\{(2,0),(1,1)\}$ and the convolution of two functions $\phi$ and $\psi$ is defined by
\begin{equation*}
(\phi\ast\psi)(E):=\frac{1}{2\pi}\int\rd E'\phi(E-E')\psi(E').
\end{equation*}
Using relation $(2\,\textrm{Re}\,x_1)(2\,\textrm{Re}\,x_2)=2\,\textrm{Re}\,(x_1\overline{x}_2+x_1x_2)$ in (\ref{vmain}), the authors in \cite{erdHos2015altshulerII} then split $\mathcal{V}_{\textrm{main}}$ into two parts as $\mathcal{V}_{\textrm{main}}=2\,\textrm{Re}\,(\mathcal{V}'_{\textrm{main}}+\mathcal{V}''_{\textrm{main}})+O_q(L^dM^{-q})$ where $\mathcal{V}'_{\textrm{main}}$ identifies with the $x_1\overline{x}_2$ part and $\mathcal{V}''_{\textrm{main}}$ with the $x_1x_2$ part, with $x_1=\gamma_{2b_1+b_3+b_4}\ast\psi_1^{\leqslant,\eta}$ and $x_2=\gamma_{2b_2+b_3+b_4}\ast\psi_2^{\leqslant,\eta}$. On one hand they show in \cite[(3.76)]{erdHos2015altshulerII} that
\begin{equation*}
\lvert\mathcal{V}''_{\textrm{main}}\rvert\leqslant \frac{CL^d}{M}
\end{equation*}
and on the other hand they split again $\mathcal{V}'_{\textrm{main}}$ into two terms $\mathcal{V}'_{\textrm{main}}=\mathcal{V}'_{\textrm{main},0}-\mathcal{V}'_{\textrm{main},1}$ where \cite[(3.69)]{erdHos2015altshulerII}
\begin{equation*}
\mathcal{V}'_{\textrm{main},1}=O\left(\frac{L^d}{M}\right).
\end{equation*}
Then $\mathcal{V}'_{\textrm{main},0}$ is shown to be equal to \cite[(3.73)]{erdHos2015altshulerII}
\begin{align*}
\mathcal{V}'_{\textrm{main},0}=\left[T(E_1)\overline{T(E_2)}\frac{e^{iA_1}}{1+e^{2iA_1}\mathcal{I}}\frac{e^{-iA_2}}{1+e^{-2iA_2}\mathcal{I}}\textrm{Tr}\frac{e^{i(A_1-A_2)}S}{(1-e^{i(A_1-A_2)}S)^2}\right]\ast\psi_1^{\leqslant,\eta}(E_1)\ast\psi_2^{\leqslant,\eta}(E_2)+O\left(\frac{L^d}{M}\right)
\end{align*}
where $A_i:=\arcsin{E_i}$ and
\begin{equation*}
T(z):=\frac{2}{1+(M-1)^{-1}e^{2i\arcsin{z}}}.
\end{equation*}
Using the estimate
\begin{equation*}
\frac{e^{\pm iA_i}}{1+e^{\pm2iA_1}\mathcal{I}}=\frac{1}{\pi\nu}\left(1+O(M^{-1}+\omega)\right),\ i=1,2,\textrm{ where }\nu=\frac{2}{\pi}\sqrt{1-E^2}
\end{equation*}
and by definition of the convolution product, we have
\begin{equation*}
\mathcal{V}'_{\textrm{main},0}=\frac{1}{\pi^4\nu^2}\int_{\mathbb{R}^2}\rd v_1\rd v_2\psi_1^{\leqslant,\eta}(v_1)\psi_2^{\leqslant,\eta}(v_2)\textrm{Tr}\frac{e^{i(\arcsin(E_1-v_1)-\arcsin(E_2-v_2))}S}{(1-e^{i(\arcsin(E_1-v_1)-\arcsin(E_2-v_2))}S)^2}\left(1+O(M^{-1}+\omega)\right)+O\left(\frac{L^d}{M}\right).
\end{equation*}
On the domain of integration we have
\begin{align*}
&\arcsin(E_1-v_1)=\arcsin{E}-\left(\frac{\omega}{2}+v_1\right)\frac{1}{\sqrt{1-E^2}}+O(\omega(\omega+M^{-\tau/2}))\\
&\arcsin(E_2-v_2)=\arcsin{E}+\left(\frac{\omega}{2}-v_2\right)\frac{1}{\sqrt{1-E^2}}+O(\omega(\omega+M^{-\tau/2}))
\end{align*}
which implies
\begin{equation*}
e^{i(\arcsin(E_1-v_1)-\arcsin(E_2-v_2))}=1-\frac{2i}{\pi\nu}(\omega+v_1-v_2)+O(\omega(\omega+M^{-\tau/2})).
\end{equation*}
Moreover by definition of $\psi_i^{\leqslant,\eta}$ and the test functions, we have for $i=1,2$
\begin{equation*}
M^{\tau/2}\int_{-1}^1\rd v\,\psi_i^\eta(M^{\tau/2}v)=\int_\mathbb{R}\rd v\,\psi(v)+O_q\big(M^{-(\tau/2+q)}\big)=2\pi+O_q\big(M^{-(\tau/2+q)}\big),\ \forall q>0
\end{equation*}
and
\begin{equation*}
M^{\tau/2}\int_1^2\rd v\,\psi_i^\eta(M^{\tau/2}v)\chi(v)\leqslant\int_{M^{\tau/2+\rho}}^{2M^{\tau/2+\rho}}\rd v\,\psi(v)=O_q\big(M^{-(\tau/2+q)}\big),\ \forall q>0
\end{equation*}
implying that
\begin{equation*}
\int_\mathbb{R}\rd v\,\psi_i^{\leqslant,\eta}(v)=M^{\tau/2}\int_{-1}^1\rd v\,\psi_i^\eta(M^{\tau/2}v)+2M^{\tau/2}\int_1^2\rd v\,\psi_i^\eta(M^{\tau/2}v)\chi(v)=2\pi+O_q\big(M^{-(\tau/2+q)}\big),\ \forall q>0.
\end{equation*}
Also,
\begin{equation*}
\int_{\mathbb{R}^2}\rd v_1\rd v_2\,\psi_1^{\leqslant,\eta}(v_1)\psi_2^{\leqslant,\eta}(v_2)(v_1-v_2)=0.
\end{equation*}
Thus $\mathcal{V}'_{\textrm{main},0}$ becomes
\begin{equation*}
\mathcal{V}'_{\textrm{main},0}=\frac{4}{\pi^2\nu^2}\alpha\textrm{Tr}\frac{S}{(1-\alpha S)^2}+O\left(\frac{L^d}{W^d}\right)
\end{equation*}
and
\begin{equation}
\label{vmainfinal}
\mathcal{V}_{\textrm{main}}=\frac{8}{\pi^2\nu^2}\textrm{Re}\,\textrm{Tr}\frac{S}{(1-\alpha S)^2}(1+O(\omega))+O\left(\frac{L^d}{W^d}\right)
\end{equation}
since $\alpha=1+O(\omega)$. Recall from \cite[Lemma 4.17]{erdHos2015altshulerII} that the expectation value of the local density of states around energy $E\in\left[-1+\kappa,1-\kappa\right]$ is given by
\begin{equation}
\label{expY}
\langle Y_{\phi_i}^\eta(E)\rangle=2\pi\nu+O(\eta).
\end{equation}
Putting equations (\ref{thetadef}), (\ref{vmainfinal}) and (\ref{expY}) together ends the proof of (\ref{theta}).

We emphasise that in the above proof, which is derived in \cite{erdHos2015altshulerII}, is valid in all regimes, i.e. while the authors are considering the diffusive regime in their paper, the hypothesis $\eta\gg\eta_c$ is not used at any stage of it and is thus valid in our case.


\bibliographystyle{plain}
\bibliography{bibliography}

\begin{thebibliography}{1}

\bibitem{MR2793463}
P.~L. Butzer, P.~J. S.~G. Ferreira, G.~Schmeisser, and R.~L. Stens.
\newblock The summation formulae of {E}uler-{M}aclaurin, {A}bel-{P}lana,
  {P}oisson, and their interconnections with the approximate sampling formula
  of signal analysis.
\newblock {\em Results Math.}, 59(3-4):359--400, 2011.

\bibitem{erdHos2015altshulerI}
L{\'a}szl{\'o} Erd{\H{o}}s and Antti Knowles.
\newblock The {A}ltshuler--{S}hklovskii formulas for random band matrices {I}:
  {T}he unimodular case.
\newblock {\em Communications in Mathematical Physics}, 333(3):1365--1416,
  2015.

\bibitem{erdHos2015altshulerII}
L{\'a}szl{\'o} Erd{\H{o}}s and Antti Knowles.
\newblock The {A}ltshuler--{S}hklovskii formulas for random band matrices {II}:
  {t}he general case.
\newblock In {\em Annales Henri Poincar{\'e}}, volume~16, pages 709--799.
  Springer, 2015.

\bibitem{MR316780}
M.~L. Glasser.
\newblock The evaluation of lattice sums. {I}. {A}nalytic procedures.
\newblock {\em J. Mathematical Phys.}, 14:409--413, 1973.

\end{thebibliography}

\end{document}